\documentclass[11pt,leqno]{amsart}
\usepackage{amsmath,amsthm,amsfonts,amssymb,oldgerm,mathrsfs}
\numberwithin{equation}{section}
\usepackage[frenchb,english]{babel}
\allowdisplaybreaks[1]
\usepackage{upref}
\usepackage{color}
\usepackage[citecolor=blue,colorlinks=true]{hyperref}
%\usepackage[notref,notcite]{showkeys}

%%%%%%%%%%%%%%%%%%%%%%%%%%%%%%%%%%%%%%%%%%%%%%%%%%

\newtheorem{theorem}{Theorem}[section]
\newtheorem{proposition}[theorem]{Proposition}

\newtheorem{lemma}[theorem]{Lemma}

\theoremstyle{definition}

\newtheorem{remark}[theorem]{Remark}
\newtheorem{example}[theorem]{Example}

%\numberwithin{equation}{section}

%%%%%%%%%%%%%%%%%%%%%%%%%%%%%%%%

\def\eps{\varepsilon }
\newcommand{\dd}{\textrm{d}}

\newcommand{\E}{\mathbb{E}}
\newcommand{\N}{\mathbb{N}}

\newcommand{\R}{\mathbb{R}}

\newcommand{\PP}{\mathbb{P}}

\newcommand\cA{{\mathcal A}}

\newcommand{\dif}{\mathrm{d}}

\newcommand{\mf}{\mathscr{F}}
\newcommand{\mr}{\mathbb{R}}
\newcommand{\prst}{\mathbb{P}}
\newcommand{\stred}{\mathbb{E}}

\newcommand{\mn}{\mathbb{N}}

\DeclareMathOperator{\diver}{div}

\newcommand{\semicol}{;}

%%%%%%%%%%%%%%%%%%%%%%%%%%%%%%%%%%%%%%%%%%%%%%%%%%%%%%

%%%%%%%%%%%%%%%%%%%%%%%%%%%%%%%%%%%%%%%%%%%%%%%%%%%%%%
\title[A regularity result for quasilinear SPDE'\MakeLowercase{s} of parabolic type]{A regularity result for quasilinear stochastic partial differential equations of parabolic type}

%%%%%%%%%%%%%%%%%%%%%%%%%%%%%%%%%%%%%%%%%%%%%%%%%%%%%
\author{Arnaud Debussche}
\address[A. Debussche]{IRMAR, ENS Rennes, CNRS, UEB, av. Robert Schuman, F-35170 Bruz, France}
\email{arnaud.debussche@ens-rennes.fr}
\author{Sylvain de Moor}
\address[S. De Moor]{IRMAR, ENS Rennes, CNRS, UEB, av. Robert Schuman, F-35170 Bruz, France}
\email{sylvain.demoor@ens-rennes.fr}
\author{Martina Hofmanov\'a}
\address[M. Hofmanov\'a]{Max Planck Institute for Mathematics in the Sciences, Inselstr. 22, 04103 Leipzig, Germany}
\email{mar.hofmanova@gmail.com}

\begin{document}
\begin{abstract}
We consider a non degenerate quasilinear parabolic stochastic partial differential equation with a uniformly elliptic diffusion matrix. It is driven by a nonlinear noise. We study regularity properties of its weak solution satisfying classical \textit{a priori} estimates. In particular, we determine conditions on coefficients and initial data under which the weak solution is H\"older continuous in time and possesses spatial regularity.
Our proof is based on an efficient method of increasing regularity: the solution is rewritten as the sum of two processes, one solves a linear parabolic SPDE with the same noise term as the original model problem whereas the other solves a linear parabolic PDE with random coefficients. This way, the required regularity can be achieved by repeatedly making use of known techniques for stochastic convolutions and deterministic PDEs.
\end{abstract}

\date{\today}
\maketitle

% %\clearpage
% \tableofcontents
% %\clearpage
%%%%%%%%%%%%%%%%%

\section{Introduction}
In this paper, we are interested in the regularity of weak solutions of non degenerate
quasilinear parabolic stochastic partial differential equation driven by a multiplicative noise. 
Let $D\subset\mr^N$ be a bounded domain with smooth boundary, let $T>0$ and set 
$D_T= (0,T)\times D$, $S_T=(0,T]\times\partial D $. We study the following problem
\begin{equation}\label{equation}
\left\{
\begin{aligned}
\displaystyle \dd u&=\diver(B(u))\,\dif t+\mathrm{div}\left(A(u)\nabla u\right) \dd t+F(u)\,\dif t+H(u)\,\dd W,&\text{ in } &D_T,\\
u&=0, &\text{ on } &S_T,\\
u(0)&=u_0, &\text{ in } &D.
\end{aligned}
\right.
\end{equation}
where $W$ is a cylindrical Wiener process on some Hilbert space $K$ and $H$ is a mapping with values in the space of the $\gamma$-radonifying operators from $K$ to certain Sobolev spaces. The diffusion matrix $A$ is assumed to be smooth and uniformly elliptic and the initial condition $u_0$ is random in general. The precise description of the problem setting will be given in the next section.

It is a well known fact in the field of PDEs and SPDEs that many equations do not, in general, have 
classical or strong solutions and can be solved only in some weaker sense. Unlike deter\-ministic 
problems, in the case of stochastic equations we can only ask whether the solution is smooth in the 
space variable since the time regularity is limited by the regularity of the stochastic integral. Thus, the 
aim of the present work is to determine conditions on coefficients and initial data under which there 
exists a spatially smooth solution to \eqref{equation}. 

Such a regularity result is fundamental and 
interesting by itself. Equations of the form \eqref{equation} appear in many sciences. Regularity of 
solutions is an important property when one wants to study qualitative behaviour. It is also a 
preliminary step when studying numerical approximations and implies strong uniqueness for \eqref{equation}.
 Our original motivation is that such 
models arise as limits of random kinetic equations (see
\cite{DV}).

The issue of existence of a classical solution to deterministic parabolic problems is well understood, 
among the main references stands the extensive book \cite{lady} which is mainly concerned with the 
solvability of initial-boundary value problems and the Cauchy problem to the basic linear and 
quasilinear second order PDEs of parabolic type. Special attention is paid to the connection between 
the smoothness of solutions and the smoothness of known data entering into the problem (initial 
condition and coefficients), nevertheless, due to the technical complexity of the proofs a direct 
generalization to the stochastic case is not obvious.

In the case of linear parabolic problems, let us mention the classical Schauder theory (see e.g. 
\cite{lieberman}) that provides \textit{a priori} estimates relating the norms of solutions of initial-
boundary value problems, namely the parabolic H\"older norms, to the norms of the known quantities in 
the problems. These results are usually employed in order to deal with quasilinear equations: the 
application of the Schauder fixed point theorem leads to the existence of a smooth solution 
under suitable hypotheses on the coefficients. In our proof, we make use of the Schauder theory as 
well, yet in an entirely different approach.

Regularity of parabolic problems in the stochastic setting was also studied in several works. In the 
previous work of the third author \cite{hof}, semilinear parabolic SPDEs (i.e. the diffusion matrix $A$ being
independent of the solution) were studied and a regularity result established by using semigroup 
arguments (see also \cite{zhang}, \cite{zhang1}). In \cite{denis-matoussi}, \cite{dms}, a maximum principle is obtained for an SPDE similar to 
\eqref{equation} but with a more general diffusion $H$, it may depend on the gradient of $u$. In \cite{gess}, existence and uniqueness of strong solutions to SPDEs with drift given by the subdifferential of a quasi-convex function is proved. H\"{o}lder continuity of solutions to nonlinear parabolic systems under suitable structure 
conditions was proved in \cite{flandoli} by energy methods. Quasilinear stochastic porous media equations are studied in \cite{br}, \cite{br1} and specific techniques for these equations are used. In comparison to this work, the 
quasilinear case considered in the present paper is more delicate and different techniques need 
to be applied. 

The transposition of the deterministic method exposed in \cite{lady} seems to be quite difficult. Fortunately, we may use a trick to avoid this. We use a very simple idea: 
a weak solution to \eqref{equation} that satisfies \textit{a priori} estimates is decomposed into two parts 
$u=y+z$ where $z$ is a solution to a linear parabolic SPDE with the same noise term as 
\eqref{equation} and $y$ solves a linear parabolic PDE with random coefficients. As a consequence, 
the problem of regularity of $u$ is reduced to showing regularity of $z$ and regularity of $y$ which can 
be handled by known techniques for stochastic convolutions and deterministic PDEs. It is
rather surprising that this classical idea used to treat semilinear equations can be applied also for 
quasilinear problems. 

Let us explain this method more precisely. As the main difficulties come from the second order and 
stochastic terms, for simplicity of the introduction we assume $B=F=0$. Let $u$ be a weak solution to
\begin{equation}\label{equation1}
\left\{
\begin{split}
\displaystyle \dd u&=\mathrm{div}\left(A(u)\nabla u\right) \dd t+H(u)\,\dd W,\\
u(0)&=u_0,
\end{split}
\right. 
\end{equation}
and let $z$ be a solution to
\begin{equation*}
\left\{
\begin{split}
\displaystyle \dd z&=\Delta z \,\dd t+H(u)\,\dd W, \\
z(0)&=0.
\end{split} 
\right.
\end{equation*}
Then $z$ is given by the stochastic convolution with the semigroup generated by the Laplacian, denoted by $(S(t))_{t\geq0}$, i.e.
$$z(t)=\int_0^t S(t-s) H(u)\,\dif W(s)$$
and regularization properties are known.
Setting $y=u-z$ it follows immediately that $y$ solves
\begin{equation}\label{equation2}
\left\{
\begin{split}
\partial_t y&=\diver(A(u)\nabla y)+\diver((A(u)-\mathrm{I})\nabla z),\\
y(0)&=u_0,
\end{split}
\right.
\end{equation}
which is a (pathwise) deterministic linear parabolic PDE. According to \textit{a priori} estimates for \eqref{equation1}, it holds for all $p\in[2,\infty)$ and $u_0\in L^p(\Omega\times D)$:
$$u\in L^p(\Omega\semicol L^\infty(0,T\semicol L^p(D)))\cap L^p(\Omega\semicol L^2(0,T\semicol W^{1,2}(D))),$$
and making use of the factorization method it is possible to show that $z$ possesses enough 
regularity so that $\nabla z$ is a function with good integrability properties. Now, a classical result for 
deterministic linear parabolic PDEs with discontinuous coefficients (see \cite{lady}) yields H\"{o}lder 
continuity of $y$ (in time and space) and consequently also H\"{o}lder continuity of $u$ itself.
Having this in hand, the regularity of $z$ can be increased to a level where the Schauder theory for 
linear parabolic PDEs with H\"{o}lder continuous coefficients applies to \eqref{equation2} (see 
\cite{lieberman}) and higher regularity of $y$ is obtained. Repeating this approach then allows us to 
conclude that $u$ is $\lambda$-H\"{o}lder continuous in time for all $\lambda<1/2$ and possesses as 
much regularity in space as allowed by the regularity of the coefficients and the initial data. In this 
article, in order to avoid lengthy proofs and notations, we restrict to spatial regularity less than $5$. 
Our method extends to higher regularity, see Remark \ref{farther}.

The paper is organized as follows. In Section \ref{sec:notation}, we introduce the basic setting and 
state our regularity results, Theorem \ref{mainresult1}, Theorem \ref{mainresult2}. 
Section \ref{sec:preliminaries} gives preliminary results  concerning the stochastic convolution and the 
smoothness of the solutions of linear parabolic equations. These are adapted from \cite{lady} and 
\cite{lieberman}, we have to explicit the dependance of the constant with respect to some data and to
treat low time regularity. The remainder of the paper is devoted to the proof of Theorem 
\ref{mainresult1} and Theorem \ref{mainresult2} that is divided into several parts. In Section 
\ref{sec:first}, we establish our first regularity result, Theorem \ref{mainresult1}, that gives some 
H\"{o}lder continuity in time and space of a weak solution to \eqref{equation}. The regularity is then 
improved in the final Section \ref{sec:increase} and Theorem \ref{mainresult2} is proved.

\section{Notations, hypotheses and the main result}
\label{sec:notation}

\subsection{Notations}

In this paper, we adopt the following conventions. For $r\in[1,\infty]$, the Lebesgue spaces $L^r(D)$ are denoted by $L^r$ and the corresponding norm by $\|\cdot\|_r$. In order to measure higher regularity of functions we make use of the Bessel potential spaces $H^{a,r}(D)$, $a\in\mr$ and $r\in(1,\infty)$.

In order to motivate the use of these spaces let us recall their basic properties (for a thorough exposition we refer the reader to the books of Triebel \cite{triebel1}, \cite{triebel2}). In the case of $\mr^N$ the Bessel potential spaces are defined in terms of Fourier transform of tempered distributions: let $a\in\mr$, $r\in(1,\infty)$ then
$$H^{a,r}(\mr^N)=\big\{f\in\mathcal{S}'(\mr^N);\,\|f\|_{H^{a,r}}:=\big\|\mathcal{F}^{-1}(1+|\xi|^2)^{a/2}\mathcal{F}f\big\|_{L^r}<\infty\big\}$$
and they belong to the Triebel-Lizorkin scale $F^a_{r,s}(\mr^N)$ in the sense that $H^{a,r}(\mr^N)=F^{a}_{r,2}(\mr^N)$.
The Bessel potential spaces $H^{a,r}(\mr^N)$ behave well under the complex interpolation, i.e. for $a_0,a_1\in\mr$ and $r_0,r_1\in(1,\infty)$ it holds that
\begin{equation}\label{eq:interpol}
 [H^{a_0,r_0}(\mr^N),H^{a_1,r_1}(\mr^N)]_\theta=H^{a,r}(\mr^N),
\end{equation}
where $\theta\in(0,1)$ and $ a=(1-\theta)a_0+\theta a_1,\,\frac{1}{r}=\frac{1-\theta}{r_0}+\frac{\theta}{r_1}$,
which makes them more suitable for studying regularity for linear elliptic and parabolic problems. Indeed, under the assumption of bounded imaginary powers of a positive operator $\mathcal A$ on a Banach space $X$, the domains of fractional powers of $\mathcal A$ are given by the complex interpolation as well: let $0\leq\alpha<\beta<\infty$, $\theta\in(0,1)$ then
$$[D(\mathcal{A}^\alpha),D(\mathcal{A}^\beta)]_\theta=D(\mathcal{A}^{(1-\theta)\alpha+\theta\beta}).$$
Furthermore, the expression \eqref{eq:interpol} and the obvious identity $H^{m,r}(D)=W^{m,r}(D)$ for $m\in\mn_0$ and $r\geq 1$ suggest how the spaces $H^{a,r}(D)$ may be defined for a general domain $D$: if $a\geq0$ and $m\in\mn$ such that $a\leq m<a+1$ then we define
$$H^{a,r}(D):=[W^{m,r}(D),L^r(D)]_{(m-a)/m}.$$
If $D$ is sufficiently regular then $H^{a,r}(D)$ coincides with the space of restrictions to $D$ of functions in $H^{a,r}(\mr^N)$ and the Sobolev embedding theorem holds true. The spaces $H^{a,r}_0(D)$, $a\geq0$, $r\in(1,\infty)$, are then defined as the closure of $C_c^\infty(D)$ in $H^{a,r}(D)$. Note, that $H_0^{a,r}(D)= H^{a,r}(D)$ if $a\leq1/r$ and $H_0^{a,r}(D)$ is strictly contained in $H^{a,r}$ if $a>1/r$. Besides, an interpolation result similar to \eqref{eq:interpol} holds for these spaces as well
$$[H^{a_0,r_0}_0(D),H^{a_1,r_1}_0(D)]_\theta=H^{a,r}_0(D).$$
For notational simplicity, we denote the norm of $H^{a,r}$ by $\|\cdot\|_{a,r}$. 

\begin{remark}
The spaces $H^{a,r}(D)$ are generally different from the Sobolev-Slobodeckij spaces $W^{a,r}(D)$ which belong to the Besov scale $B^a_{r,s}(D)$ in the sense that $W^{a,r}(D)=B^a_{r,r}(D)$ if $a>0,\,a\notin\mn$.
Nevertheless, we have the following two relations which link the two scales of function spaces together
$$W^{a,r}(D)=H^{a,r}(D)\quad\text{ if }\quad a\in\mn_0,\,r\in[1,\infty)\quad \text{ or }\quad a\geq0,\,r=2,$$
and
$$H^{a+\varepsilon,r}(D)\hookrightarrow W^{a,r}(D)\hookrightarrow H^{a-\varepsilon,r}(D) \qquad a\in\mr,\,r\in(1,\infty),\,\varepsilon>0.$$
\end{remark}

Below we use the Laplace operator with Dirichlet boundary conditions, denoted by $\Delta_D$. Considered as an operator on $L^{r}$, its domain is $H^{2,r}_0$ and it is the infinitesimal generator of an analytic semigroup denoted by $S=(S(t))_{t\geq 0}$. Moreover, it follows from the above considerations that the domains of its fractional powers coincide with the Bessel potential spaces, that is $D((-\Delta_D)^\alpha)=H^{2\alpha,r}_0$, $\alpha\geq0$. Therefore, one can build a fractional power scale (or a Sobolev tower, see \cite{amann1}, \cite{engel}) generated by $(L^r,-\Delta_D)$ to get
\begin{equation}\label{eq:powerscale}
\big[\big(H_0^{2\alpha,r},-\Delta_{D,2\alpha,r}\big);\,\alpha\geq0\big],
\end{equation}
where $-\Delta_{D,2\alpha,r}$ is the $H_0^{2\alpha,r}$-realization of $-\Delta_D$.
Having this in hand, an important result \cite[Theorem V.2.1.3]{amann1} describes the behavior of the semigroup $S$ in this scale. More precisely, the operator $\Delta_{D,2\alpha,r}$ generates an analytic semigroup $S_{2\alpha,r}$ on $H_0^{2\alpha,r}$ which is naturally obtained from $S$ by restriction, i.e. $S_{2\alpha,r}(t)$ is the $H_0^{2\alpha,r}$-realization of $S(t)$, $t\geq0$, and we have the following regularization property: for any $\delta>0$ and $t>0$, $S_{2\alpha,r}(t)$ maps $H_0^{2\alpha,r}$ into $H_0^{2\alpha+\delta,r}$ with
\begin{equation}\label{regul}
\big\|S_{2\alpha,r}(t)\big\|_{\mathcal{L}(H_0^{2\alpha,r},H_0^{2\alpha+\delta,r})}\leq \frac{C}{t^{\delta/2}}.
\end{equation}
For notational simplicity of the sequel we do not directly specify the spaces where the operators $\Delta_D$ and $S(t)$, $t\geq0$, are acting since this is always clear from the context.

\medskip
Another important scale of function spaces which is used throughout the paper are the H\"{o}lder spaces. In particular, if $X$ and $Y$ are two Banach spaces and $\alpha\in(0,1)$, $C^{\alpha}(X;Y)$ denotes the space of bounded H\"{o}lder continuous functions with values in $Y$ equipped with the norm
$$\|f\|_{C^{\alpha}(X;Y)}\ =\ \sup\limits_{x\in X}\|f(x)\|_Y\ + \sup\limits_{x,x'\in X, x\neq x'}\frac{\|f(x)-f(x')\|_Y}{\|x-x'\|_X^{\alpha}}.$$ 
In the sequel, we consider the spaces $C^{\alpha}(\overline D)=C^{\alpha}(\overline D;\R)$, $C^{\alpha}([0,T];Y)$ where $Y=H^{a,r}$ or $Y=C^{\beta}(\overline D)$ and $C^{\alpha}([0,T]\times\overline D)=C^{\alpha}([0,T]\times \overline D;\R)$... Besides, we employ H\"{o}lder spaces with different regularity in time and space, i.e. $C^{\alpha,\beta}([0,T]\times\overline D)$. For $\alpha \in (0,1/2)$, $\beta\in (0,1)$, they are equipped with the norm
$$\|f\|_{C^{\alpha,\beta}}=\sup_{(t,x)}|f(t,x)|+\sup_{(t,x)\neq(s,y)}\frac{|f(t,x)-f(s,y)|}{\max\{|t-s|^{\alpha},|x-y|^\beta\}}.$$
For larger indices $\alpha+l/2$, $\beta+k$ with $\alpha \in (0,1/2)$, $\beta\in (0,1)$ and $l,k$  non 
negative integers, the norm is defined by
$$
\|f\|_{C^{\alpha+l/2,\beta+k}}=\sum_{2r\le l, |\gamma|\le k, 2r+|\gamma|\le \mathrm{max}\{l,k\}} \|\partial_t^r\partial^\gamma f\|_{C^{\alpha,\beta}}.
$$
We have denoted by $\partial_t$ the partial derivative with respect to the time variable $t$ and, 
for a multi-index $\gamma=(\gamma_1,\dots,\gamma_N)$, $\partial^\gamma =
\partial_1^{\gamma_1}\dots \partial_N^{\gamma_N}$ where $\partial_i$ is the partial derivative
with respect to $x_i$. For $\alpha=\beta/2$, $k=l$, these are the classical H\"older spaces used to measure the
regularity of solutions of parabolic problems. Here, we need these slightly more general 
spaces.  In this work, we always use $\alpha=\beta/2$. This is a very natural choice since in this case 
$C^{\alpha,\beta}$ is precisely the H\"older space of order $\beta$ with respect to the parabolic distance: $d((t,x),(s,y))=\max\{|t-s|^{1/2},|x-y|\}.$
\smallskip

Note that $C^{\beta/2,\beta+1}([0,T]\times\overline D)=C^{(\beta+1)/2,\beta+1}([0,T]\times\overline D)$.
Indeed both spaces consist of bounded functions such that 
$$
\|f\|_{C^{\beta/2,\beta}}+\sum_{ |\gamma|=1} \|\partial^\gamma f\|_{C^{\beta/2,\beta}}
$$
is finite.
\smallskip

Given a domain $D$ with a $C^{\beta+k}$ boundary, using local coordinates, we define classically
the H\" older spaces on the boundary $C^{\alpha,\beta}([0,T]\times\partial D)$ (see \cite{lady}, section 3, chapter II. In particular (3.19) and the paragraph above). 

Clearly if $v$ is a function in $C^{\alpha+l/2,\beta+k}([0,T]\times\overline D)$, its restriction to 
$[0,T]\times \partial D$ is in $C^{\alpha+l/2,\beta+k}([0,T]\times\partial D)$. But it can be much smoother,
for instance if $v$ is constant on the boundary.

Note that it holds $C^{\alpha}([0,T];C^{\beta}(\overline D))\subsetneqq C^{\alpha,\beta}([0,T]\times\overline D)$ and therefore we have to distinguish these two spaces. An example is given in \cite{holder}: $\overline{D }=[0,T]$ and $u(t,x)=(x+t)^\alpha$, $\alpha\in(0,1)$.
 
For $k\in\mn_0$ we denote by $C^k_b=C^k_b(\mr)$ the space of continuous functions that have continuous bounded derivatives up to order $k$. Note that for $k=0$ it is the space of continuous functions, not necessarily bounded.

In the whole article, $C,\, C_i,\,  K,\, K_i, \, \kappa, \, \dots$ denote constants. When they depend on some parameters of the problem, this is explicitly stated.

\subsection{Hypotheses}

Let us now introduce the precise setting of \eqref{equation}. We work on a finite-time interval $[0,T],\,T>0,$ and on a bounded domain $D$ in $\R^N$ with smooth boundary. We denote by $D_T$ the cylinder $(0,T)\times D$ and by $S_T$ the lateral surface of $D_T$, that is $S_T=(0,T]\times \partial D$. Concerning the coefficients $A,\,B,\,F,\,H$, we only state here the basic assumptions that guarantee the existence of a weak solution and are valid throughout the paper. Further regularity hypotheses are necessary in order to obtain better regularity of the weak solution and will be specified later.
We assume that the flux function
$$B=(B_1,\dots,B_N):\mr\longrightarrow \mr^N$$
is continuous with linear growth.
The diffusion matrix
$$A=(A_{ij})_{i,j=1}^N:\mr\longrightarrow\mr^{N\times N}$$
is supposed to be continuous, symmetric, positive definite and bounded. In particular, there exist constants $\nu$, $\mu>0$ such that for all $u\in\R$ and $\xi\in\R^N$,
\begin{equation}\label{elliptic}
\nu |\xi|^2\leq A(u)\xi\cdot\xi\leq \mu |\xi|^2.
\end{equation}
The drift coefficient $F:\mr\rightarrow\mr$ is continuous with linear growth.

Regarding the stochastic term, let $(\Omega,\mf,(\mf_t)_{t\geq0},\prst)$ be a stochastic basis with a complete, right-continuous filtration.
The driving process $W$ is a cylindrical Wiener process: $W(t)=\sum_{k\geq1}\beta_k(t) e_k$ with $(\beta_k)_{k\geq1}$ being mutually independent real-valued standard Wiener processes relative to $(\mf_t)_{t\geq0}$ and $(e_k)_{k\geq1}$ a complete orthonormal system in a separable Hilbert space $K$. For each $u\in L^2(D)$ we consider a mapping $\,H(u):K\rightarrow L^2(D)$ defined by $H(u)\,e_k=H_k(\cdot,u(\cdot))$. In particular, we suppose that $H_k\in C( D\times\mr)$ and the following linear growth condition holds true
\begin{equation}\label{growthhk}
\sum_{k\geq1}|H_k(x,\xi)|^2\leq C\big(1+|\xi|^2\big),\qquad\forall x\in D,\,\xi\in\mr.
\end{equation}
This assumption implies in particular that $H$ maps $L^2(D)$ to $L_2(K;L^2(D))$
where $L_2(K;L^2(D))$ denotes the collection of Hilbert-Schmidt operators from $K$ to $L^2(D)$. Thus, given a predictable process $u$ that belongs to $L^2(\Omega;L^2(0,T;L^2(D)))$, the stochastic integral $t\mapsto\int_0^t H(u)\dif W$ is a well defined process taking values in $L^2(D)$ (see \cite[Chapter 4]{daprato} for a thorough exposition).

Later on we are going to estimate the weak solution of $(\ref{equation})$ in certain Bessel potential spaces $H^{a,r}_0$ with $a\geq 0$ and $r\in[2,\infty)$ and therefore we need to ensure the existence of the stochastic integral in \eqref{equation} as an $H^{a,r}_0$-valued process. We recall that the Bessel potential spaces $H^{a,r}_0$ with $a\geq 0$ and $r\in[2,\infty)$ belong to the class of $2$-smooth Banach spaces since they are isomorphic to $L^r(0,1)$ according to \cite[Theorem 4.9.3]{triebel1} and hence they are well suited for the stochastic It\^o integration (see \cite{gamma1}, \cite{gamma2} for the precise construction of the stochastic integral). So, let us denote by $\gamma(K,X)$ the space of the $\gamma$-radonifying operators from $K$ to a $2$-smooth Banach space $X$. We recall that $\Psi\in\gamma(K,X)$ if the series 
$$\sum_{k\geq 0}\gamma_k\Psi(e_k)$$
converges in $L^2(\widetilde{\Omega},X)$, for any sequence $(\gamma_k)_{k\geq 0}$ of independent Gaussian real-valued random variables on a probability space $(\widetilde{\Omega},\widetilde{\mathcal{F}},\widetilde{\PP})$ and any orthonormal basis $(e_k)_{k\geq 0}$ of $K$. Then, the space $\gamma(K,X)$ is endowed with the norm 
$$\|\Psi\|_{\gamma(K,X)}:=\Bigg{(}\widetilde{\E}\Bigg{|}\sum_{k\geq 0}\gamma_k\Psi(e_k)\Bigg{|}_X^2\Bigg{)}^{1/2}$$ 
(which does not depend on $(\gamma_k)_{k\geq 0}$, nor on $(e_k)_{k\geq 0}$) and is a Banach space.
Now, if $a\geq 0$ and $r\in[2,\infty)$ we denote by \eqref{compgamma} the following hypothesis
\begin{equation}\label{compgamma}\tag{$\mathrm{H}_{a,r}$}
\begin{split}
\|H(u)\|_{\gamma(K,H^{a,r}_0)}&\leq \begin{cases}
							C\big(1+\|u\|_{H^{a,r}_0}\big),&\quad a\in[0,1],\\
							C\big(1+\|u\|_{H^{a,r}_0}+\|u\|_{H^{1,ar}_0}^a\big),&\quad a>1,
							\end{cases}\\
\end{split}
\end{equation}
i.e. $H$ maps $H^{a,r}_0$ to $\gamma(K,H^{a,r}_0)$ provided $a\in[0,1]$ and it maps $H^{a,r}_0\cap H^{1,ar}_0$ to $\gamma(K,H^{a,r}_0)$ provided $a>1.$ The precise values of parameters $a$ and $r$ will be given later in each of our regularity results.
\begin{remark}\label{H0r}
We point out that, thanks to the linear growth hypothesis \eqref{growthhk} on the functions $(H_k)_{k\geq 1}$, one can easily verify that, for all $r\in[2,\infty)$, the bound (\hyperref[compgamma]{$\text{H}_{0,r}$}) holds true. 
\end{remark}

In order to clarify the assumption \eqref{compgamma}, let us present the main examples we have in mind.

\begin{example}\label{ex1}
Let $W$ be a $d$-dimensional $(\mf_t)$-Wiener process, that is $W(t)=\sum_{k=1}^d W_k(t)\, e_k$, where $W_k,\,k=1,\dots,d,$ are independent standard $(\mf_t)$-Wiener processes and $(e_k)_{k=1}^d$ is an orthonormal basis of $K=\mr^d$. Then hypothesis \eqref{compgamma} is satisfied for $a\geq 0,\, r\in[2,\infty)$ provided the functions $H_1,\dots,H_d$ are sufficiently smooth and respect the boundary conditions in the following sense: if $a>\frac{1}{r}$, then
$$\nabla^l_x H_k(x,0)=0,\qquad x\in\partial D,\quad\forall k=1,\dots,d,\quad\forall l\in\mn_0,\,l< a-\frac{1}{r},$$
(for more details we refer the reader to \cite{runst}).
Note that in this example it is necessary to restrict ourselves to the subspace $H_0^{a,r}\cap H_0^{1,ar}$ of $H_0^{a,r}$ so that the corresponding Nemytskij operators $u\mapsto H_k(\cdot,u(\cdot))$ take values in $H_0^{a,r}$. In fact, if $1+1/r\leq a\leq N/r$, $r\in(1,\infty),$ then only linear operators map $H_0^{a,r}$ to itself (see \cite{runst}).
\end{example}

\begin{example}\label{ex2}
In the case of linear operator $H$ we are able to deal with an infinite dimensional noise. Namely, let $W$ be a $(\mf_t)$-cylindrical Wiener process on $K=L^2(D)$, that is $W(t)=\sum_{k\geq 1} W_k(t)\, e_k$, where $W_k,\,k\geq 1,$ are independent standard $(\mf_t)$-Wiener processes and $(e_k)_{k\geq 1}$ an orthonormal basis of $K$. We assume that $H$ is linear of the form  $H(u) e_k:=u\,Qe_k,\,k\geq 1$, where $Q$ denotes a linear operator from $K$ to $K$. Then, one can verify that the hypothesis \eqref{compgamma} is satisfied for $a\geq 0,\, r\in[2,\infty)$ provided we assume the following regularity property: $\sum_{k\geq 1}\|Qe_k\|^2_{a,\infty} <\infty$. We point out that, in this example, $H$ maps $H^{a,r}_0$ to $\gamma(K,H^{a,r}_0)$ for any $a\geq 0$ and $r\in[2,\infty)$.
\end{example}

As we are interested in proving the regularity up to the boundary for weak solutions of \eqref{equation}, it is necessary to impose certain compatibility conditions upon the initial data and the null Dirichlet boundary condition. To be more precise, since $u_0$ can be random in general, let us assume that $u_0:\Omega\rightarrow C(\overline{ D})$ is measurable and $u_0=0$ on $\partial D$. 
Further integrability and  regularity  assumptions on $u_0$ will be specified later.

Note that other boundary conditions could be studied with similar arguments, see Remark \ref{farther}
below.

\subsection{Existence of weak solutions}

Let us only give a short comment here as existence is not our main concern and we will only make use of \textit{a priori} estimates for parabolic equations of the form \eqref{equation}. In the recent work \cite{dehovo}, the authors gave a well-posedness result for degenerate parabolic SPDEs (with periodic boundary conditions) of the form
\begin{equation*}
\left\{
\begin{split}
\dif u&=\diver(B(u))\,\dif t+\diver(A(u)\nabla u)\,\dif t+H(u)\,\dif W,\\
u(0)&=u_0,
\end{split}
\right.
\end{equation*}
where the diffusion matrix was supposed to be positive semidefinite. One can easily verify that the Dirichlet boundary conditions and the drift term $F(u)$ in \eqref{equation} do not cause any additional difficulties in the existence part of the proofs and therefore the corresponding results in \cite{dehovo}, namely Section 4 (with the exception of Subsection 4.3) and Proposition 5.1, are still valid in the case of \eqref{equation}. In particular, we have the following.

\begin{theorem}\label{thm:weaksol}
There exists $\big((\tilde{\Omega},\tilde{\mf},(\tilde{\mf}_t),\tilde{\prst}),\tilde{W},\tilde{u}\big)$ which is a weak martingale solution to \eqref{equation} and, for all $p\in[2,\infty)$ and $u_0\in L^p(\tilde\Omega;L^p)$
$$\tilde u\in L^p(\tilde \Omega\semicol C([0,T]\semicol L^2))\cap L^p(\tilde \Omega\semicol L^\infty(0,T\semicol L^p))\cap L^p(\tilde\Omega\semicol L^2(0,T\semicol W^{1,2})).$$
\end{theorem}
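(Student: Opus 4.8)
The plan is to reduce the statement to the well-posedness theory already developed in \cite{dehovo} and to check that the two features distinguishing \eqref{equation} from the problem treated there, namely the homogeneous Dirichlet boundary condition and the additional drift $F(u)$, do not affect the construction. Since the diffusion matrix here is uniformly elliptic by \eqref{elliptic} rather than merely positive semidefinite, the situation is in fact simpler than in \cite{dehovo}: no vanishing-viscosity limit is needed and the gradient estimate is available directly. I would therefore follow the classical three steps, an approximation scheme, uniform \emph{a priori} estimates, and a stochastic compactness argument yielding a martingale solution, indicating at each stage why the Dirichlet condition and the term $F$ are harmless.

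For the \emph{a priori} estimates I would apply the It\^o formula to $\norm{u}_2^2$ and, more generally, to $\norm{u}_p^p$ along the approximate solutions. After integrating by parts the second-order term produces the dissipation $-2\int_D A(u)\nabla u\cdot\nabla u\,\dif x\le-2\nu\norm{\nabla u}_2^2$ by \eqref{elliptic}, the boundary contributions vanishing thanks to the null Dirichlet condition (so that a Galerkin basis of Dirichlet eigenfunctions respects the setting). The flux term $\diver(B(u))$ tested against $u$ gives $-\int_D B(u)\cdot\nabla u\,\dif x$, a lower-order quantity controlled by Young's inequality and the linear growth of $B$, part of it being absorbed into the dissipation; the drift $F(u)$ and the It\^o correction $\sum_k\norm{H_k(\cdot,u)}_2^2$ are each bounded by $C(1+\norm{u}_2^2)$ using linear growth and \eqref{growthhk}. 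Controlling the martingale part by the Burkholder--Davis--Gundy inequality and closing with Gronwall's lemma yields the bounds in $L^p(\tilde\Omega\semicol L^\infty(0,T\semicol L^p))$ and, from the time-integrated dissipation, in $L^p(\tilde\Omega\semicol L^2(0,T\semicol W^{1,2}))$. The time continuity with values in $L^2$ is then obtained from the standard stochastic variant of the Lions--Magenes argument, once the drift is controlled in the dual $W^{-1,2}$ and the It\^o term in $L_2(K;L^2)$.

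Existence itself comes from compactness. I would introduce a Galerkin truncation together with a smoothing of the coefficients, inheriting the uniform estimates above, and then establish tightness of the laws of the approximate solutions: the uniform bound in $L^2(0,T\semicol W^{1,2})$ combined with a uniform fractional-in-time bound on the increments (the latter for the stochastic part coming from a BDG/Kolmogorov estimate) gives compactness in $L^2(0,T\semicol L^2)$ of the deterministic variable, and together with tightness of the driving noise this yields tightness of the joint laws. The Skorokhod representation theorem then furnishes a new stochastic basis $(\tilde\Omega,\tilde\mf,(\tilde\mf_t),\tilde\prst)$ carrying copies of the approximations that converge $\tilde\prst$-almost surely, in particular strongly in $L^2(0,T\semicol L^2)$.

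The main obstacle is the passage to the limit in the quasilinear second-order term $\diver(A(u_n)\nabla u_n)$: weak convergence of $\nabla u_n$ alone does not suffice. Here the strong $L^2(0,T\semicol L^2)$ convergence of $u_n$ is decisive, for it gives $A(u_n)\to A(u)$ strongly ($A$ being continuous and bounded), which when paired with the weak convergence $\nabla u_n\rightharpoonup\nabla u$ identifies the product as $A(u_n)\nabla u_n\rightharpoonup A(u)\nabla u$. The same strong convergence identifies $B(u_n)$, $F(u_n)$ and, in the Hilbert--Schmidt sense via \eqref{growthhk}, $H(u_n)$; the stochastic integral passes to the limit through the martingale characterization of the noise on the new basis. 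Thus the limit $\tilde u$ is a weak martingale solution of \eqref{equation}, and the asserted integrability survives by lower semicontinuity of the norms under weak convergence together with Fatou's lemma for the expectations. This reproduces, in the Dirichlet setting, the content of Section~4 and Proposition~5.1 of \cite{dehovo}.
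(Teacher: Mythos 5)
Your proposal is correct and follows essentially the same route as the paper, which simply invokes \cite{dehovo} (Section 4 except Subsection 4.3, and Proposition 5.1) after observing that the Dirichlet boundary condition and the drift $F(u)$ cause no additional difficulty; you make the same reduction and, in addition, correctly sketch the Galerkin/It\^o--BDG--Gronwall estimates, the stochastic compactness and Skorokhod step, and the strong--weak identification of $A(u_n)\nabla u_n$ that underlie the cited result. The observation that uniform ellipticity \eqref{elliptic} removes the vanishing-viscosity layer of \cite{dehovo} is accurate and consistent with the paper's remark that only the non-degenerate estimates are needed.
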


By a weak martingale solution we mean that it is weak in the PDE sense, that is, the equation \eqref{equation} is satisfied in $\mathcal{D}'([0,T)\times D)$. It is also weak in probabilistic sense, that is, the probability space and the driving Wiener process are considered as unknown (see \cite[Chapter 8]{daprato} for precise definition).

In the sequel, we assume the existence of a weak solution (in the PDE sense) on the original probability space $(\Omega,\mf,\prst)$ and show that it possesses regularity that depends on the regularity of coefficients and initial data. We point out that this assumption is taken without loss of generality since pathwise uniqueness can be proved once we have sufficient regularity in hand and hence existence of a pathwise solution can be then obtained by the method of Gy\"ongy and Krylov \cite{krylov} (see also \cite[Subsection 4.3]{dehovo}).

%A similar result can be obtained in the case of null Dirichlet boundary conditions as well.

\subsection{The main result}

To conclude this section let us state our main results to be proved precisely.\

\begin{theorem}\label{mainresult1}
Let $u$ be a weak solution to \eqref{equation} such that, for all $p\in[2,\infty)$,
$$u\in L^2(\Omega\semicol C([0,T]\semicol L^2))\cap L^p(\Omega\semicol L^\infty(0,T\semicol L^p))\cap L^2(\Omega\semicol L^2(0,T\semicol W^{1,2}_0)).$$
Assume that
\begin{enumerate}
 \item $u_0\in L^m(\Omega;C^{\iota}(\overline{D}))$ for some $\iota>0$ and all $m\in[2,\infty)$, and $u_0=0$ on $\partial D$ a.s.
 
 \item {\em(\hyperref[compgamma]{$\mathrm{H}_{1,2}$})} is fulfilled.
\end{enumerate}
Then there exists $\eta>0$ such that, for all $m\in[2,\infty)$, the weak solution $u$ belongs to $L^m(\Omega;C^{\eta/2,\eta}(\overline{D_T}))$.
\end{theorem}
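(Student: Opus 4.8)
The plan is to implement the decomposition $u=y+z$ described in the introduction, but now including the drift terms $B$ and $F$. Specifically, I would let $z$ solve the linear stochastic heat equation
\begin{equation*}
\dd z=\Delta z\,\dd t+H(u)\,\dd W,\qquad z(0)=0,
\end{equation*}
so that $z$ is the stochastic convolution $z(t)=\int_0^t S(t-s)H(u)\,\dd W(s)$. Since we are given the \textit{a priori} bound $u\in L^p(\Omega;L^\infty(0,T;L^p))\cap L^2(\Omega;L^2(0,T;W^{1,2}_0))$ and hypothesis (\hyperref[compgamma]{$\mathrm{H}_{1,2}$}) controls $\|H(u)\|_{\gamma(K,H^{1,2}_0)}$ by $C(1+\|u\|_{H^{1,2}_0})$, the first step is to apply the factorization method to the stochastic convolution together with the regularization property \eqref{regul} of the semigroup in the Sobolev tower. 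This should yield that $z$ lives, with all stochastic moments, in a space of the form $L^m(\Omega;C([0,T];H^{1+\theta,r}_0))$ for a suitable small $\theta>0$ and large $r$, and hence, by the Sobolev embedding $H^{1+\theta,r}_0\hookrightarrow C^{1}(\overline D)$ (for $r$ large), that $\nabla z$ is a bounded, H\"older-in-space function with good pathwise integrability. The key output I need from this step is that $\nabla z\in L^r(D_T)$ pathwise (indeed better) with control of $\E\|\nabla z\|^m$ for all $m$.

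Next I would set $y=u-z$ and observe that, pathwise, $y$ solves the deterministic linear parabolic problem
\begin{equation*}
\d_t y=\diver(A(u)\nabla y)+\diver\big((A(u)-\mathrm{I})\nabla z\big)+\diver(B(u))+F(u),\qquad y(0)=u_0,
\end{equation*}
with null Dirichlet boundary data (here $y=0$ on $S_T$ because both $u$ and $z$ vanish there). This is a divergence-form linear equation whose principal coefficients $A(u)$ are merely bounded and measurable (satisfying the uniform ellipticity \eqref{elliptic}), and whose right-hand side is in divergence form $\diver(\mathbf{g})$ with $\mathbf{g}=(A(u)-\mathrm{I})\nabla z+B(u)$ plus a lower-order term $F(u)$. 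The growth hypotheses on $A,B,F$ together with the integrability of $u$ and $\nabla z$ guarantee that $\mathbf{g}\in L^q(D_T)$ and $F(u)\in L^q(D_T)$ for suitable $q$, pathwise, with all moments. The plan is then to invoke the De~Giorgi--Nash--Moser / Ladyzhenskaya-type interior-and-boundary regularity theorem for divergence-form parabolic equations with bounded measurable coefficients and $L^q$ data (as in the preliminary results adapted from \cite{lady}), which produces a quantitative parabolic H\"older estimate: $y\in C^{\eta/2,\eta}(\overline{D_T})$ for some exponent $\eta>0$ depending only on $N,\nu,\mu,q$ and on the H\"older exponent $\iota$ of the initial datum, with the H\"older norm bounded by the data norms. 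Here the compatibility $u_0=0$ on $\partial D$ and $u_0\in C^\iota(\overline D)$ are exactly what is needed to run the estimate up to the boundary and at the initial time.

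Combining the two pieces, $u=y+z$ is H\"older continuous in space-time: $z$ is H\"older (in fact much smoother in space and $\lambda$-H\"older in time for $\lambda<1/2$ by the convolution estimates), and $y$ is $C^{\eta/2,\eta}$ by the deterministic theory; shrinking $\eta$ if necessary so that both exponents are comparable, we get $u\in C^{\eta/2,\eta}(\overline{D_T})$ pathwise. The final step is to track moments: since the factorization estimate gives $\E\|z\|^m_{C^{\eta/2,\eta}}<\infty$ for all $m$, and the deterministic H\"older estimate bounds $\|y\|_{C^{\eta/2,\eta}}$ by a polynomial expression in $\|u\|_{L^\infty_t L^p}$, $\|\nabla z\|_{L^q(D_T)}$, $\|u_0\|_{C^\iota}$ and the growth constants, raising to the $m$-th power and taking expectation (using the assumed $L^p(\Omega)$-bounds on $u$ and $L^m(\Omega)$-bound on $u_0$, and H\"older's inequality in $\omega$) yields $u\in L^m(\Omega;C^{\eta/2,\eta}(\overline{D_T}))$ for every $m$.

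I expect the main obstacle to be the deterministic step: obtaining a parabolic H\"older estimate for $y$ that is \emph{global} up to the parabolic boundary, \emph{quantitative} with explicit dependence of the constant on the $L^q$-norms of the data (so that moments can be taken afterward), and valid with only the limited time regularity we can afford, rather than the classical $\alpha=\beta/2$ with smooth data. This is precisely why the preliminary section must carefully re-derive the Ladyzhenskaya-type results from \cite{lady} with explicit constants and boundary/initial behaviour; the bookkeeping needed to verify that $\mathbf{g}$ and $F(u)$ indeed lie in the right $L^q$ spaces pathwise (choosing $r,\theta,q$ compatibly so that $\nabla z$ has enough integrability and $A(u)-\mathrm{I}$ times $\nabla z$ remains controllable) is where the technical heart of the argument lies.
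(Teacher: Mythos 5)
There is a genuine gap in your Step 1, and it sits exactly at the technical heart of the theorem. You claim that factorization plus the regularization property \eqref{regul} yields $z\in L^m(\Omega;C([0,T];H^{1+\theta,r}_0))$ for some $\theta>0$ and \emph{large} $r$, whence $\nabla z$ would be bounded and H\"older in space. But the only hypothesis above spatial regularity zero is (\hyperref[compgamma]{$\mathrm{H}_{1,2}$}), i.e. $H(u)$ is controlled in $\gamma(K,H^{1,2}_0)$ only in the $r=2$ scale; to place $z$ in $H^{1+\theta,r}$ with large $r$ you would need (\hyperref[compgamma]{$\mathrm{H}_{1,r}$}) for that $r$, which is not assumed in Theorem \ref{mainresult1} (it is assumed only in Theorem \ref{mainresult2}). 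What (\hyperref[compgamma]{$\mathrm{H}_{1,2}$}) gives via Proposition \ref{RegulStochastic}(ii) is $z\in L^2(\Omega;L^2(0,T;H^{a,2}_0))$ for $a<2$ --- and only with \emph{second} moments, since the a priori bound on $u$ in $L^2(0,T;W^{1,2}_0)$ is itself only in $L^2(\Omega)$. In dimension $N\geq 2$ the space $H^{a,2}$, $a<2$, does not embed into $C^1(\overline D)$ (one would need $a-N/2>1$), so boundedness of $\nabla z$ is unobtainable under the stated hypotheses, and the all-moments claim fails as well.

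The missing idea --- and the paper's actual mechanism --- is an interpolation between two families of estimates: the high-regularity, low-moment bound $z\in L^2(\Omega;L^2(0,T;H^{a,2}_0))$, $a\in(0,2)$, from (\hyperref[compgamma]{$\mathrm{H}_{1,2}$}), and the low-regularity, all-moments bound $z\in L^p(\Omega;L^p(0,T;H^{b,p}_0))$, $b\in(0,1)$, which comes for free from the linear growth \eqref{growthhk} (i.e. (\hyperref[compgamma]{$\mathrm{H}_{0,p}$}), Remark \ref{H0r}) and $u\in L^p(\Omega;L^p(0,T;L^p))$. Interpolating, with $\frac{1}{r}=\frac{\theta}{2}+\frac{1-\theta}{p}$ and $c=a\theta+b(1-\theta)$, one can arrange $c>1$ for \emph{every} $r\in[2,\infty)$, yielding $\E\|\nabla z\|^r_{L^r(D_T)}<\infty$ for all $r$; note this is precisely the output you said you need, and that mere $L^r$-integrability of $\nabla z$ (not boundedness) is all that Theorem \ref{thm:lady01} requires of $g=B(u)+(A(u)-\mathrm{I})\nabla z$, since it asks only $g\in L^{2r_0}(D_T)$. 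With Step 1 repaired in this way, the remainder of your plan coincides with the paper's proof: the pathwise Ladyzhenskaya-type estimate \eqref{eq:estimate} for $y$ (your concern about explicit constants and boundary behaviour is exactly what Theorem \ref{thm:lady01} settles), then H\"older continuity of $z$ itself obtained from (\hyperref[compgamma]{$\mathrm{H}_{0,m}$}) via Proposition \ref{RegulStochastic}(i) and the Sobolev embedding $H^{\delta,m}\hookrightarrow C^\lambda$ with \emph{small} exponents (your parenthetical ``much smoother in space'' is likewise overstated at this stage, though harmless), and finally $\eta=\min(\alpha,2\gamma,\lambda)$ with moments tracked through the linear estimate as you describe.
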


Next result gives higher regularity. As in the deterministic case, some compatibility conditions are required on the boundary $\{0\}\times \partial D$. To state these, we introduce the notation: 
$u_0^{(1)}=\diver\left(A(u_0)\nabla u_0\right) + \diver(B(u_0))+F(u_0)$ and $\cA_0=\sum_{ij} A_{ij}(0)\partial_{ij}$.
\begin{theorem}\label{mainresult2}
Let $k=1,\, 2,\, 3$ or $4$. Let $u$ be a weak solution to \eqref{equation} such that, for all $p\in[2,\infty)$,
$$u\in L^2(\Omega\semicol C([0,T]\semicol L^2))\cap L^p(\Omega\semicol L^\infty(0,T\semicol L^p))\cap L^2(\Omega\semicol L^2(0,T\semicol W^{1,2}_0)).$$ Assume that
\begin{enumerate}
 \item $u_0\in L^m(\Omega;C^{k+\iota}(\overline{D}))$ for some $\iota>0$ and all $m\in[2,\infty)$,
 \item $u_0=0$ on $\partial D$ a.s., and $u_0^{(1)}=0$ on $\partial D$ a.s. if $k=2,\,3,\, 4$, and 
 $ 2 (A'(0)\nabla u_0)\cdot \nabla u_0^{(1)} + B'(0)\cdot \nabla u_0^{(1)} + \cA_0 u_0^{(1)}=0$ on $\partial D$ if $k=4$ .
 \item $A,\,B\in C^k_b$ and $F\in C^{k-1}_b$,
 \item \eqref{compgamma} is fulfilled for all $a<k+1$ and $r\in[2,\infty)$.
\end{enumerate}
Then for all $\lambda\in(0,1/2)$ and all $m\in[2,\infty)$, the weak solution $u$ belongs to $L^m(\Omega\semicol C^{\lambda,k+\iota}(\overline{D_T}))$.
\end{theorem}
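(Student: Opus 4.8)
The plan is to prove Theorem \ref{mainresult2} by an iterative bootstrap based on the splitting $u=y+z$ of the introduction, starting from the base regularity $u\in L^m(\Omega;C^{\eta/2,\eta}(\overline{D_T}))$ provided by Theorem \ref{mainresult1}. Here $z$ is the stochastic convolution solving $\dif z=\Delta z\,\dif t+H(u)\,\dif W$, $z(0)=0$, and $y=u-z$ solves pathwise the deterministic linear parabolic problem (compare \eqref{equation2}, now keeping the lower order terms)
\[
\partial_t y=\diver\big(A(u)\nabla y\big)+\diver\big((A(u)-\Id)\nabla z\big)+\diver(B(u))+F(u),
\]
with $y=0$ on $S_T$ and $y(0)=u_0$. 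Since $z$ vanishes on $S_T$ (it is built in the spaces $H^{a,r}_0$) and $z(0)=0$, while $y$ vanishes on $S_T$ by construction, all boundary and corner compatibility constraints are carried by $y$ through $u_0$. At each stage I would upgrade the regularity of $z$ by stochastic convolution estimates, feed the improved $z$ and $u$ into the coefficients and right hand side of the $y$-problem, upgrade $y$ by the deterministic theory, and conclude a better H\"older exponent for $u=y+z$; a bounded number of iterations raises the spatial exponent from $\eta$ to $k+\iota$.

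For the $z$-step I would use the factorization method together with the regularization property \eqref{regul} in the fractional power scale \eqref{eq:powerscale}, as in the preliminary results of Section \ref{sec:preliminaries}. If $u$ currently lies in $L^m(\Omega;C^{\lambda,\beta}(\overline{D_T}))$, hence in $H^{a,r}_0$ for every $a<\beta$ and every finite $r$, then hypothesis \eqref{compgamma}---valid for all $a<k+1$ and $r\in[2,\infty)$ by assumption (4)---gives $H(u)\in\gamma(K,H^{a,r}_0)$ with controlled norm, so the stochastic convolution gains spatial regularity (the elliptic smoothing of the heat semigroup, at fixed time-H\"older exponent $\lambda<1/2$). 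Choosing $r$ large and using the Sobolev embedding $H^{a,r}\hookrightarrow C^{a-N/r}(\overline D)$ then keeps $z$ ahead of $y$ in space and, with $a$ up to $k+1$, lets $z$ reach spatial exponent beyond the target $k+\iota$ (for $\iota<1$); thus $z$ is never the bottleneck.

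The $y$-step is where the regularity is genuinely limited. The coefficients $A_{ij}(u)$ inherit the regularity of $u$ up to the smoothness of $A$: with $A\in C^k_b$ and $u\in C^{\lambda,\beta}$ the composition $A(u)$ lies in $C^{\lambda,\beta}$ when $\beta\le k$, and the lower order data $\diver(B(u))$, $F(u)$, $(A(u)-\Id)\nabla z$ are controlled in the same parabolic H\"older class using assumption (3) and the regularity of $z$. The deterministic estimates of Section \ref{sec:preliminaries} (the divergence-form result of \cite{lady} for merely H\"older coefficients at low regularity, and the Schauder estimates of \cite{lieberman} adapted to the anisotropic spaces $C^{\alpha+l/2,\beta+k}$ with low time exponent $\alpha=\lambda<1/2$ for higher regularity) then raise the regularity of $y$, provided the matching compatibility conditions hold on $\{0\}\times\partial D$. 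These are exactly assumption (2): since $\nabla z(0)=0$, the forcing term $\diver((A(u)-\Id)\nabla z)$ vanishes at $t=0$, so the corner conditions involve only $u_0$ through $A,B,F$, and one computes that the successive time-derivatives of $y$ at $t=0$ are $u_0^{(1)},u_0^{(2)},\dots$. Because the time exponent is below $1/2$, reaching spatial exponent $k+\iota$ requires the vanishing on $\partial D$ of $u_0^{(j)}$ exactly for $2j\le k$, i.e. for $j\le\lfloor k/2\rfloor$; this produces $u_0^{(1)}=0$ for $k\ge 2$ and the second order condition $u_0^{(2)}=2(A'(0)\nabla u_0)\cdot\nabla u_0^{(1)}+B'(0)\cdot\nabla u_0^{(1)}+\cA_0 u_0^{(1)}=0$ for $k=4$. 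Iterating the two steps finitely many times brings $u=y+z$ to $L^m(\Omega;C^{\lambda,k+\iota}(\overline{D_T}))$ for every $\lambda<1/2$, the $y$-estimates being pathwise and the $z$-estimates coming directly in $L^m(\Omega)$.

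The main obstacle will be the compatibility bookkeeping in the mixed-order parabolic scale. Because the Brownian noise caps the time regularity at $\lambda<1/2$, spatial smoothness is produced twice as fast as temporal smoothness, so each extra pair of spatial orders costs one more vanishing trace $u_0^{(j)}|_{\partial D}=0$; the delicate points are to verify that assumption (2) gives exactly these traces---in particular the $k=4$ expression---and that the Schauder estimates genuinely hold up to the corner $\{0\}\times\partial D$ in the anisotropic spaces $C^{\alpha+l/2,\beta+k}$, a setting not covered verbatim by \cite{lieberman} and therefore prepared in Section \ref{sec:preliminaries}. A secondary difficulty is keeping the compositions $A(u),B(u),F(u)$ and the product $(A(u)-\Id)\nabla z$ in the required H\"older class at every stage; this is what forces the asymmetric smoothness demands of assumption (3), the divergence on $B$ requiring $B\in C^k_b$ while $F\in C^{k-1}_b$ suffices.
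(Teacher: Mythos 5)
Your proposal follows essentially the same route as the paper: the splitting $u=y+z$, an alternating bootstrap in which the stochastic convolution $z$ gains spatial regularity via the factorization method and hypothesis \eqref{compgamma} while $y$ is upgraded pathwise by the deterministic estimates of Section \ref{sec:preliminaries} (Theorem \ref{thm:lady01}, Theorem \ref{thm:schauder}, and for $k\ge 2$ the non-divergence-form results Theorem \ref{t3.3} and Theorem \ref{t3.4} after expanding $\diver(A(u)\nabla y)$), with the compatibility conditions of assumption (ii) arising exactly as you describe because the traces of $z$ and its derivatives vanish on $\partial D$ so that the corner conditions reduce to $u_0^{(1)}$ and, for $k=4$, the second-order expression. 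Your bookkeeping of which trace conditions are needed for which $k$, and your identification of the delicate points (anisotropic Schauder estimates with time exponent below $1/2$, the extra time regularity of $f$ on $S_T$ for $k=4$), match the paper's proof.
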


\medskip

\begin{remark}\label{farther}
We could investigate higher regularity. Indeed, the tools required to prove Theorem \ref{mainresult2}
extend without difficulty. In fact, the proofs require different arguments for $k=1,2,3$ and $4$. But 
for $k\ge 5$, the proof is exactly the same as for $k=4$. This requires just a generalization of
Theorem \ref{t3.4} below. Note that, as in the deterministic case, this requires stronger assumptions
and compatibility conditions. The case of Neumann boundary conditions can be treated similarly. 

Our result clearly extends to the case of coefficients $A,\, B, \, F$ which may also depend on $x$ under
suitable regularity assumptions.

The case of periodic boundary conditions is 
much easier. Indeed, no compatibility conditions
are required. Moreover, the proof of regularity higher than $3$ is straightforward by differentiation
of the equation.
\end{remark}
\begin{remark}
Note, that the condition $m\in[2,\infty)$ in the assumption (i) of Theorem \ref{mainresult1} and Theorem \ref{mainresult2} can be weakened to $m\in[2,m_0]$ for some sufficiently large $m_0$. This would weaken the  results accordingly. However, in the proof, we use big vallues of $m$ and the precise
value of $m_0$ is not of great interest. Therefore, we prefered to state our result in the form above.

\end{remark}

\section{Preliminaries}
\label{sec:preliminaries}

\subsection{Regularity of the stochastic convolution}
\label{sec:convolution}

Our proof of Theorem \ref{mainresult1} and Theorem \ref{mainresult2} is based on a regularity result that concerns mild solutions to linear SPDEs of the form
\begin{equation}\label{aux123}
\left\{
\begin{aligned}
\displaystyle \dd Z&=\Delta_D Z\,\dif t+\Psi(t)\,\dd W_t,\\
Z(0)&=0,
\end{aligned}
\right.
\end{equation}
where $\Delta_D$ is the Laplacian on $D$ with null Dirichlet boundary conditions acting on various Bessel potential spaces.

The solution to \eqref{aux123} is given by the stochastic convolution, that is
$$Z(t)=\int_0^tS(t-s)\Psi(s)\,\dd W_s,\quad t\in[0,T].$$
In order to describe the connection between its regularity and the regu\-larity of $\Psi$, we recall the following proposition.

% \begin{proposition}\label{prop:initial}
% Let $a\geq 0$, $p,r\in[2,\infty)$ and $\gamma\in (0,1)$. If $Z_0\in L^p(\Omega;W^{a+2\gamma,r})$ then
% $$\stred\|S(\cdot)Z_0\|_{C^\gamma([0,T];W^{a,r})}^p\leq C\,\stred \|Z_0\|_{a+2\gamma,r}^p.$$
% 
% \begin{proof} \rmk{TO CHECK}
% The proof follows directly from interpolation and semigroup properties, namely, from the fact that
% $$\|S(t)-\mathrm{I}\|_{\mathcal{L}(W^{a+2\gamma,r},W^{a,r})}\leq Ct^\gamma.$$
% %%{\color{red} I believe this is ok, here are some references for your info because I must admit I had to search as I never thought about this before. The inequality above can be found in Seidler (in Hilbert spaces): DaPrato, Zabczyk's maximal inequality revisited I, page 71 (or 6th page of the PDF), line -8, another good reference is DaPrato, Zabczyk: Stochastic equations in infinite dimensions, Proposition A.20, page 394.}
% \end{proof}
% \end{proposition}

\begin{proposition}\label{RegulStochastic}
Let $a\geq 0$ and $r\in[2,\infty)$ and let $\Psi$ be a progressively measurable process in $L^p(\Omega;L^p(0,T;\gamma(K, H^{a,r}_0)))$.
\begin{enumerate}
\item Let $p\in(2,\infty)$ and $\delta\in[0,1-2/p)$. Then, for any $\lambda\in[0,1/2-1/p-\delta/2)$, $Z\in L^p(\Omega;C^{\lambda}(0,T;H^{a+\delta,r}_0))$ and
$$\E\|Z\|^p_{C^{\lambda}(0,T;H^{a+\delta,r}_0)}\leq C\,\E\|\Psi\|^p_{L^p(0,T;\gamma(K,H^{a,r}_0))}.$$
\item Let $p\in[2,\infty)$ and $\delta\in(0,1)$. Then $Z\in L^p(\Omega;L^p(0,T;H^{a+\delta,r}_0))$ and
$$\E\|Z\|^p_{L^p(0,T;H^{a+\delta,r}_0)}\leq C\,\E\|\Psi\|^p_{L^p(0,T;\gamma(K,H^{a,r}_0))}.$$
\end{enumerate}

\begin{proof}
Having established the behavior of the Dirichlet Laplacian and the corresponding semigroup along the fractional power scale \eqref{eq:powerscale}, the proof of (i) is an application of the factorization method and can be found in \cite[Corollary 3.5]{gamma1} whereas the point (ii) follows from the Burkholder-Davis-Gundy inequality and regularization properties \eqref{regul} of the semigroup.
\end{proof}

\end{proposition}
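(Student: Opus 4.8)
The two assertions rest on the same two ingredients: the Burkholder--Davis--Gundy (BDG) inequality for stochastic integrals with values in the $2$-smooth spaces $H^{a+\delta,r}_0$, and the smoothing estimate \eqref{regul} for the semigroup along the fractional power scale. The plan is to prove (ii) by a direct moment estimate and (i) by the factorization method of Da Prato--Kwapie\'n--Zabczyk.

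For (ii), I would fix $t\in(0,T]$ and apply the BDG inequality to $Z(t)=\int_0^tS(t-s)\Psi(s)\,\dd W_s$ in $H^{a+\delta,r}_0$. Since the $\gamma$-radonifying operators form an operator ideal, the composition satisfies $\|S(t-s)\Psi(s)\|_{\gamma(K,H^{a+\delta,r}_0)}\le\|S(t-s)\|_{\mathcal L(H^{a,r}_0,H^{a+\delta,r}_0)}\,\|\Psi(s)\|_{\gamma(K,H^{a,r}_0)}$, and \eqref{regul} bounds the operator norm by $C(t-s)^{-\delta/2}$. This yields
\[
\E\|Z(t)\|_{a+\delta,r}^p\le C\,\E\Big(\int_0^t(t-s)^{-\delta}\,\|\Psi(s)\|_{\gamma(K,H^{a,r}_0)}^2\,\dd s\Big)^{p/2}.
\]
Since $\delta<1$, the kernel $s\mapsto s^{-\delta}$ lies in $L^1(0,T)$, so after integrating in $t$ the right-hand side is a convolution to which Young's inequality in the form $L^1\ast L^{p/2}\to L^{p/2}$ applies, giving $\E\int_0^T\|Z(t)\|_{a+\delta,r}^p\,\dd t\le C\,\E\int_0^T\|\Psi(s)\|_{\gamma(K,H^{a,r}_0)}^p\,\dd s$, which is the claim.

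For (i), I would fix $\alpha\in(0,1)$ and use the Beta-integral identity $\int_s^t(t-\sigma)^{\alpha-1}(\sigma-s)^{-\alpha}\,\dd\sigma=\pi/\sin(\pi\alpha)$ together with the semigroup law to factorize $Z(t)=\frac{\sin\pi\alpha}{\pi}\int_0^t(t-\sigma)^{\alpha-1}S(t-\sigma)Y(\sigma)\,\dd\sigma$, where $Y(\sigma)=\int_0^\sigma(\sigma-s)^{-\alpha}S(\sigma-s)\Psi(s)\,\dd W_s$. Repeating the estimate of (ii) for $Y$, now with the extra weight $(\sigma-s)^{-2\alpha}$ coming from the singular prefactor, produces the kernel $(\sigma-s)^{-2\alpha-\delta}$, which is integrable precisely when $2\alpha+\delta<1$; this bounds $\E\|Y\|_{L^p(0,T;H^{a+\delta,r}_0)}^p$ by $C\,\E\|\Psi\|_{L^p(0,T;\gamma(K,H^{a,r}_0))}^p$. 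It then remains to invoke the classical mapping property of the deterministic factorization operator $g\mapsto\int_0^t(t-\sigma)^{\alpha-1}S(t-\sigma)g(\sigma)\,\dd\sigma$, which sends $L^p(0,T;H^{a+\delta,r}_0)$ into $C^\lambda([0,T];H^{a+\delta,r}_0)$ whenever $\lambda<\alpha-1/p$. The two constraints $\lambda+1/p<\alpha$ and $\alpha<(1-\delta)/2$ can be met simultaneously exactly when $\lambda<1/2-1/p-\delta/2$, which reproduces the admissible range for $\lambda$ (and the hypothesis $\delta<1-2/p$ is what makes the endpoint case $\lambda=0$ nonempty).

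The only genuinely delicate point is the vector-valued stochastic calculus underlying both steps: one must justify the BDG inequality for $\gamma(K,\cdot)$-valued integrands in the $2$-smooth spaces $H^{a+\delta,r}_0$ (legitimate here because these spaces are isomorphic to $L^r(0,1)$) and check that $S(t-s)$ composes with $\Psi(s)$ inside the $\gamma$-radonifying ideal so that \eqref{regul} may be applied. Once these facts are granted, everything reduces to the scalar convolution and Young inequalities above, and the bookkeeping of exponents is routine.
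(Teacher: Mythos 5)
Your proposal is correct and follows essentially the same route as the paper: part (ii) via the Burkholder--Davis--Gundy inequality in the $2$-smooth space combined with the smoothing estimate \eqref{regul} and a scalar Young convolution bound, and part (i) via the Da Prato--Kwapie\'n--Zabczyk factorization method, which is exactly the argument the paper delegates to \cite[Corollary 3.5]{gamma1}. Your exponent bookkeeping ($2\alpha+\delta<1$ and $\lambda<\alpha-1/p$ yielding $\lambda<1/2-1/p-\delta/2$) matches the stated ranges, so nothing further is needed.
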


\subsection{Regularity results for deterministic parabolic PDE's}\label{s3.2}

We now state classical regularity results from \cite{lady} and \cite{lieberman}. Since the notations are different and difficult to find in the books, we restate the results with ours and in the situation needed in the sequel. Moreover, the dependence on the coefficients and initial data is not always explicit in 
these books. We thus precise the bounds, this requires new proofs. Also, we give new results where
the regularity is not measured in the classical parabolic scaling. 

We first consider a linear parabolic PDE of the form
\begin{equation}\label{eq}
\left\{
\begin{aligned}
\partial_t v&=\diver(a(t,x)\nabla v)+\diver g(t,x)+f(t,x),&\text{ in }&D_T,\\
v&=0,&\text{ on }&S_T,\\
v(0)&=v_0,&\text{ in }&D,
\end{aligned}
\right.
\end{equation}
and assume that there exist $\nu,\mu>0$ such that $\nu|\xi|^2\leq a(t,x)\xi\cdot\xi\leq\mu|\xi|^2$, for all $(t,x)\in D_T$, $\xi\in\mr^N$.

This equation is precisely (1.9) with $\mathscr{L}$ defined in (1.1), from \cite[Chapter III]{lady} with $f_i=g_i,\,i=1,\dots, N$. Note that $a$ is matrix valued. We have switched the notation for the unknown from $u$ to $v$ and for the initial data from $\psi_0$ to $v_0$. Condition (1.2) in \cite[Chapter III]{lady} is precisely the condition on $a$ just above.

\begin{theorem}\label{thm:lady01}
Assume that $v_0\in C^\beta(\overline D)$ for some $\beta>0$ and $v_0=0$ on $\partial D$.
There exists $r_0$ depending on $N$; $\alpha\in(0,\beta]$ depending on $N,\nu,\mu,r_0,D$ and a constant $K_1$ depending on $N,\nu,\mu,r_0,D,\alpha$ such that if
\begin{itemize}
\item[(i)] $v$ is a weak solution to \eqref{eq} which is continuous in time with values in $L^2$ and belongs to $L^2(0,T;W^{1,2}_0)$,
\item[(ii)] $f\in L^{r_0}(D_T)$ and $g\in L^{2r_0}(D_T)$,
%\item[(iii)] there exist $\nu,\mu>0$ such that $\nu|\xi|^2\leq a(t,x)\xi\cdot\xi\leq\mu|\xi|^2$, for all $(t,x)\in[0,T]\times D$, $\xi\in\mr^N$
%\item[(iii)]$v_0\in C^\alpha(\overline D)$ % for some $\iota>0$ 
%and $v_0=0$ on $\partial D$,
\end{itemize}
then $v\in C^{\alpha/2,\alpha}(\overline{D_T})$ and the following estimate holds
\begin{equation}\label{eq:estimate}
\begin{split}
&\|v\|_{C^{\alpha/2,\alpha}}\leq K_1\big(\|v_0\|_{C^\alpha(\overline{D})}+\|g\|_{L^{2r_0}(D_T)}+\|f\|_{L^{r_0}(D_T)}\big).
\end{split}
\end{equation}

\begin{proof}
This result follows from \cite[Theorem 10.1, Chapter III]{lady}. 
Note the compatibility 
condition $v_0=0$ on $\partial D$ implies that the data on the parabolic boundary 
$\Gamma_T= \{0\}\times D\cup S_T$ is in  $C^{\alpha/2,\alpha}(\Gamma_T)$, which is denoted 
by $H^{\alpha,\alpha/2}(\Gamma_T)$ in \cite{lady}.

Numbers of equations and theorems below refer to \cite[Chapter III]{lady}.

We first have to check that $v$ is in $L^\infty([0,T]\times D)$. This follows from Theorem 7.1..  

We first note that it is always possible to find a number $r_0$ depending on $N$ such that the couple $(r,q)=(r_0,r_0)$ satisfies (7.2) with some $\kappa_1\in (0,1)$ if $N\geq 2$ or $\kappa_1\in (0,\frac{1}{2})$ if $N=1$. Hypothesis (7.1) with $\mu_1$ depending on $r_0$ and $N$ then follows.
The crucial point is the estimate (7.14) then Theorem 6.1 from Chapter II applies and yields the maximum principle. Moreover, the dependence of the $L^\infty$-bound on the coefficients and the initial data can be seen from (6.2). We obtain
\begin{align*}
\begin{aligned}
\|v\|_{L^\infty(D_T)}\leq C\big(1+\|v_0\|_{L^\infty}\big)\Big(1+\|g\|_{L^{2r_0}(D_T)}^{2(1+\frac{N}{2\kappa_1})}+\|f\|_{L^{r_0}(D_T)}^{1+\frac{N}{2\kappa_1}}\Big)
\end{aligned}
\end{align*}
where the constant $C$ does not depend on $v_0,\,g,\,f$ and depends on $a$ only through its ellipticity constant $\nu$. In other words we have proved that the linear mapping
\begin{align}\label{eq:solmap}
\begin{aligned}
L^\infty(D)\times L^{r_0}(D_T)\times L^{2r_0}(D_T)&\rightarrow L^\infty(D_T)\\
(v_0,f,g)&\mapsto v
\end{aligned}
\end{align}
is bounded hence continuous and it holds
\begin{align}\label{3}
\|v\|_{L^\infty(D_T)}\leq C\big(\|v_0\|_{L^\infty}+\|g\|_{L^{2r_0}(D_T)}+\|f\|_{L^{r_0}(D_T)}\big).
\end{align}

Since we assume zero Dirichlet boundary conditions we may now apply the second part of Theorem 10.1 and deduce that $v$ is in $C^{\alpha/2,\alpha}(\overline{D_T})$. The proof is based on estimate (10.6) applied to cylinders $Q(\rho,\tau)$ intersecting the parabolic boundary $\Gamma_T=\{0\}\times D\cup S_T$ and to levels $k\geq \max_{Q(\rho,\tau)\cap\Gamma_T}\pm u$ and follows from Theorem 8.1 in Chapter II. 
Note that due to Remark 7.1 in Chapter II, the H\"older exponent $\alpha$ depends neither on the $L^\infty$-bound of $v$ nor on the $L^{r_0}$- and $L^{2r_0}$-norm of $f$ and $g$, respectively, but may depend on the $C^\beta$-norm of $v_0$ as can be seen from the proof of Theorem 8.1, Chapter II, namely from (8.6). Nevertheless, one can first take $\|v_0\|_{C^\beta}\leq 1$ and then argue by linearity of the solution map \eqref{eq:solmap} to obtain the final estimate \eqref{eq:estimate}.
\end{proof}
\end{theorem}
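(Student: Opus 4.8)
The plan is to follow the De Giorgi--Nash--Moser strategy for divergence-form parabolic equations with bounded measurable coefficients, the crucial feature being that both the H\"older exponent and the estimate depend on $a$ only through the ellipticity constants $\nu,\mu$, and not on any modulus of continuity of $a$. The argument splits naturally into an $L^\infty$ bound and a H\"older bound, followed by a normalisation step that turns the inhomogeneous bounds produced by the iterations into the clean homogeneous estimate \eqref{eq:estimate}.

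First I would establish that $v\in L^\infty(D_T)$ with an explicit bound. Since $v\in L^\infty(0,T;L^2)\cap L^2(0,T;W^{1,2}_0)$, I would test the weak formulation against truncations $(v-k)_+$ and derive an energy inequality on the super-level sets $\{v>k\}$. Combining this with the parabolic Gagliardo--Nirenberg inequality (interpolating $L^\infty_tL^2_x$ and $L^2_tW^{1,2}_x$ into $L^q(D_T)$ for a suitable $q$) yields a recursive inequality for the measures of the level sets. Choosing $r_0$ large enough in terms of $N$ is precisely what renders the source terms $f,g$ subcritical relative to this parabolic scaling, so that a Stampacchia-type iteration closes and produces a finite bound. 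Tracking the constants, the bound depends on $a$ only through the lower ellipticity constant $\nu$ and takes the inhomogeneous shape $\|v\|_{L^\infty(D_T)}\le C(1+\|v_0\|_{L^\infty})(1+\|g\|_{L^{2r_0}}^{\theta_1}+\|f\|_{L^{r_0}}^{\theta_2})$, the $(1+\cdots)$ structure being an artefact of the nonlinear iteration.

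With the $L^\infty$ bound in hand, I would then prove interior and boundary H\"older continuity by the oscillation-decay method. Interiorly, one derives De Giorgi-type energy estimates for truncations on parabolic cylinders $Q(\rho,\tau)$ and iterates over dyadic scales to obtain a geometric decay of the oscillation of $v$, giving a H\"older modulus with exponent $\alpha$ depending only on $N,\nu,\mu,r_0,D$. For regularity up to $\overline{D_T}$ I would use the compatibility condition: because $v_0\in C^\beta(\overline D)$ with $v_0=0$ on $\partial D$ matches the zero lateral Dirichlet condition continuously at the corner $\{0\}\times\partial D$, the data on the parabolic boundary $\Gamma_T=(\{0\}\times D)\cup S_T$ is itself H\"older; applying the same truncation estimates on cylinders meeting $\Gamma_T$, and to levels $k$ exceeding the boundary data there, propagates the oscillation decay to the boundary. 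This caps the global exponent at $\alpha\le\beta$.

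The step I expect to be the main obstacle is the bookkeeping of the constant dependence together with the final linearity argument. The difficulty is that the oscillation-decay proof naturally lets $\alpha$ depend on $\|v_0\|_{C^\beta}$, while the $L^\infty$ estimate appears in the inhomogeneous form above, so neither directly gives \eqref{eq:estimate}. To remedy this I would first normalise to the case $\|v_0\|_{C^\beta}\le 1$, in which $\alpha$ becomes a \emph{universal} exponent, independent of the size of the data and, crucially, independent of $\|v\|_{L^\infty}$ and of the norms of $f$ and $g$; then I would invoke the linearity of the solution map $(v_0,f,g)\mapsto v$ of \eqref{eq:solmap} to rescale and recover the estimate for arbitrary data. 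Verifying rigorously that $\alpha$ depends only on $N,\nu,\mu,r_0,D$ — and not on the various norms entering the iteration — is the delicate point, and is exactly what forces one to revisit the proofs in \cite{lady} rather than merely quote their statements.
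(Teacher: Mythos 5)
Your proposal is correct and follows essentially the same route as the paper: the paper likewise obtains the $L^\infty$ bound (in the same inhomogeneous form, with $r_0$ chosen so the source terms are subcritical), gets boundary H\"older continuity from the compatibility condition via truncation estimates on cylinders meeting $\Gamma_T$ at levels dominating the boundary data, and resolves the dependence of $\alpha$ on $\|v_0\|_{C^\beta}$ by exactly your normalisation-plus-linearity trick. The only difference is presentational: the paper quotes the De Giorgi machinery from Ladyzhenskaya--Solonnikov--Ural'ceva (Theorems 6.1, 7.1, 8.1 of Chapter II and 10.1 of Chapter III) and checks its hypotheses, whereas you propose to re-derive those iterations directly.
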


As the next step, we recall the Schauder estimate for equations in divergence form \cite[Theorem 6.48]{lieberman}.

\begin{theorem}\label{thm:schauder}
Let $\alpha\in (0,1)$, assume that $D$ has a $C^{\alpha+1}$ boundary and 
\begin{itemize}
\item[(i)] $a, g\in C^{\alpha/2,\alpha}(\overline{D_T})$, $f\in L^p(D_T)$ for some $p\geq \frac{N+2}{1-\alpha}$,
\item[(ii)] $v_0\in C^{1+\alpha}(\overline{D})$, $v_0=0$ on $\partial D$.
%\item[(iii)] there exist $\nu,\mu>0$ such that $\nu|\xi|^2\leq a(t,x)\xi\cdot\xi\leq\mu|\xi|^2$, for all $(t,x)\in[0,T]\times D$, $\xi\in\mr^N$
\end{itemize}
Then there exists a unique weak solution to \eqref{eq}. Moreover, there exists a constant $K_2$ depending on $N,\nu,\mu,p,D,\alpha$ such that
\begin{align}\label{eq:schest}
\begin{aligned}
\|v\|_{C^{(1+\alpha)/2,1+\alpha}}\leq K_2 P_1(a_{\alpha/2,\alpha})\big(\|v_0\|_{C^{1+\alpha}(\overline D)}+\|g\|_{C^{\alpha/2,\alpha}}+\|f\|_{L^p(D_T)}\big),
\end{aligned}
\end{align}
where 
%$P(z)=z^{\frac{1+3\alpha}{\alpha}}+z^{\frac{2+\alpha}{\alpha}}+z^{\frac{p+\alpha p+2\alpha}{2\alpha}}$ 
$P_1$ is a polynomial and $a_{\alpha/2,\alpha}=\|a\|_{C^{\alpha/2,\alpha}}$.

\begin{proof}
According to \cite[Theorem 6.48]{lieberman} one obtains the required regularity of $v$. Note that our compatibility condition $v_0=0$ on $\partial D$ implies that the boundary data are in $C^{(1+\alpha)/2,1+\alpha}$ on the parabolic boundary $S_T$. This parabolic boundary 
is denoted by ${\mathcal P}\Omega$ in \cite{lieberman} and this H\"older space by 
$H_{1+\alpha}$. Thus the compatibility condition of \cite[Theorem 6.48]{lieberman} is satisfied. 

But the dependence on $a_{{\alpha/2,\alpha}}$ is not obvious. In the following we adapt the technique from \cite[Theorem 3.1]{juliaarnaud} and show that the dependence is indeed polynomial.

Take $r\in(0,1]$ to be fixed later. Let $(B_i(r/4))_{i=1,\dots,n}$ be a covering of $\overline{D_T}$ by parabolic cylinders of radius $r$, i.e. balls of radius $r$ with respect to the parabolic distance
$$d((t,x),(s,y))=\max\{|t-s|^{1/2},|x-y|\}.$$
We denote their centers by $(t_i,x_i)$. Let
$$\varphi_i\in C^\infty_c(B_i(r))\text{ such that } 0\leq \varphi_i\leq 1\text{ and }\varphi_i\equiv 1 \text{ on } B_i(r/2).$$
It is possible to choose these functions such that
$$\|\nabla \varphi_i\|_{C^0(\overline{D_T})}\leq C r^{-1},\quad\|\partial_t\varphi_i\|_{C^0(\overline{D_T})}\leq C r^{-2},\quad\|\nabla\varphi_i\|_{C^{\alpha/2,\alpha}}\leq C r^{-1-\alpha}.$$
Set $a_i=a(t_i,x_i), \,v_i=\varphi_i v,\, v_{0,i}=\varphi_i(0) v_0$. Then $v_i$ solves the following parabolic equation
\begin{equation}\label{eq:const}
\left\{
\begin{aligned}
\partial_t v_i-\diver(a_i\nabla v_i)&=\diver[(a-a_i)\nabla v_i]+\varphi_i f+\diver(\varphi_i g)-g\nabla\varphi_i\\
&\qquad-a\nabla v\nabla\varphi_i-\diver[a v\nabla\varphi_i]+v\partial_t\varphi_i,&\text{ in }&D_T,\\
v_i&=0,&\text{ on }&S_T,\\
v_i(0)&=v_{0,i},&\text{ in }&D.
\end{aligned}
\right.
\end{equation}
This is an equation of the form
\begin{equation*}
\left\{
\begin{aligned}
\partial_t w-\diver(b\nabla w)&=h_0+\diver h,&\text{ in }&D_T,\\
w&=0,&\text{ on }&S_T,\\
w(0)&=w_{0},&\text{ in }&D,
\end{aligned}
\right.
\end{equation*}
where $b$ is a constant matrix. According to \cite[Theorem 6.48]{lieberman}, it holds true
\begin{align}\label{e3.8}
\begin{aligned}
\|w\|_{C^{(1+\alpha)/2,1+\alpha}}\leq C\big(\|w_0\|_{C^{1+\alpha}}+\|h\|_{C^{\alpha/2,\alpha}}+\|h_0\|_{M^{1,N+1+\alpha}(D_T)}\big),
\end{aligned}
\end{align}
where the constant depends only on $N,\nu,\mu,D,\alpha$ and $M^{1,N+1+\alpha}(D_T)$ denotes the Morrey space (corresponding to the parabolic distance). It is easy to see that the 
compatibility conditions are satisfied by \eqref{eq:const}.

By H\"older inequality it follows from the definition of the norm in $M^{1,N+1+\alpha}(D_T)$ that if $p\geq\frac{N+2}{1-\alpha}$ then $L^p(D_T)\hookrightarrow M^{1,N+1+\alpha}(D_T)$. Therefore
\begin{align}\label{1}
\begin{aligned}
\|w\|_{C^{(1+\alpha)/2,1+\alpha}}\leq C_0\big(\|w_0\|_{C^{1+\alpha}}+\|h\|_{C^{\alpha/2,\alpha}}+\|h_0\|_{L^{p}(D_T)}\big)
\end{aligned}
\end{align}
with a constant depending only on $N,\nu,\mu,D,\alpha$.

We apply this bound to \eqref{eq:const} and obtain
\begin{align}\label{2}
\begin{aligned}
\|v_i\|_{C^{(1+\alpha)/2,1+\alpha}}&\leq C_0\Big[\|\varphi_i(0)\|_{C^{1+\alpha}}\|v_{0}\|_{C^{1+\alpha}}+\|(a-a_i)\nabla v_i\|_{C^{\alpha/2,\alpha}}\\
&\qquad\quad+\|f\|_{L^p(D_T)}+\|\varphi_i \|_{C^{\alpha/2,\alpha}}\|g \|_{C^{\alpha/2,\alpha}}\\
&\qquad\quad+\|\nabla\varphi_i\|_{C^0(\overline{D_T})}\|g\|_{L^p(D_T)}\\
&\qquad\quad+\|a\|_{C^0(\overline{D_T})}\|\nabla\varphi_i\|_{C^0(\overline{D_T})}\|\nabla v\|_{L^p(D_T)}\\
&\qquad\quad+\|a\|_{C^{\alpha/2,\alpha}}\|v \|_{C^{\alpha/2,\alpha}}\|\nabla\varphi_i \|_{C^{\alpha/2,\alpha}}\\
&\qquad\quad+\|v\|_{C^0(\overline{D_T})}\|\partial_t\varphi_i\|_{C^0(\overline{D_T})}\Big].
\end{aligned}
\end{align}
In the following, $|\cdot|_{\alpha/2,\alpha}$ denotes the $\alpha$-H\"older seminorm with respect to the parabolic distance, i.e.
$$|w|_{\alpha/2,\alpha}=\sup_{\substack{(t,x),(s,x)\in D_T\\(t,x)\neq(s,x)}}\frac{|w(t,x)-w(s,x)|}{d((t,x),(s,y))^\alpha}.$$
%and $a_{\alpha/2,\alpha}:=\|a\|_{C^{\alpha/2,\alpha}}$.
By interpolation, there exists $C>0$ such that we have for every $\varepsilon>0$ (see e.g. \cite[Lemma 3.2, Chapter II]{lady}):
$$\| v_i\|_{C^{1/2,1}}\leq \varepsilon\| v_i\|_{C^{(1+\alpha)/2,1+\alpha}}+\frac{C}{\varepsilon^{1/\alpha}}\|v_i\|_{C^0(\overline{D_T})}.$$
Besides,
\begin{align*}
\|\nabla v_i\|_{C^0(\overline{D_T})}&\leq \| v_i\|_{C^{1/2,1}}\\
\|\nabla v_i\|_{C^0(\overline{D_T})}+|\nabla v_i|_{\alpha/2,\alpha}&\leq \|v\|_{C^{(1+\alpha)/2,1+\alpha}}
\end{align*}
hence choosing $\varepsilon=r^\alpha$ we obtain
\begin{align*}
\|(a-a_i)\nabla v_i\|_{C^{\alpha/2,\alpha}}&\leq\|a-a_i\|_{C^0(\overline{B_i(r)})}\|\nabla v_i\|_{C^0(\overline{D_T})}\\
&\quad +\|a-a_i\|_{C^0(\overline{B_i(r)})}|\nabla v_i|_{\alpha/2,\alpha}+a_{\alpha/2,\alpha}\|\nabla v_i\|_{C^0(\overline{D_T})}\\
&\leq a_{\alpha/2,\alpha} r^\alpha\|\nabla v_i\|_{C^0(\overline{D_T})}+a_{\alpha/2,\alpha} r^\alpha|\nabla v_i|_{\alpha/2,\alpha}\\
&\quad+a_{\alpha/2,\alpha}\|\nabla v_i\|_{C^0(\overline{D_T})}\\
%&\leq|a|_{\alpha/2,\alpha} r^\alpha\| v_i\|_{C^{(1+\alpha)/2,1+\alpha}}+|a|_{\alpha/2,\alpha}\|\nabla v_i\|_{C^0(\overline{D_T})}
&\leq a_{\alpha/2,\alpha}\big(2r^\alpha\| v_i\|_{C^{(1+\alpha)/2,1+\alpha}}+Cr^{-1}\|v_i\|_{C^0(\overline{D_T})}\big).
\end{align*}
We set $r=\big(4C_0 a_{\alpha/2,\alpha}\big)^{-1/\alpha}\wedge 1$, where $C_0$ is the constant defined in \eqref{1}, and deduce from \eqref{2}
\begin{align*}
\|v_i\|_{C^{(1+\alpha)/2,1+\alpha}}&\leq C\Big[r^{-1-\alpha}\|v_{0}\|_{C^{1+\alpha}}+a_{\alpha/2,\alpha}r^{-1}\|v_i\|_{C^0(\overline{D_T})}+\|f\|_{L^p(D_T)}\\
&\qquad\quad+r^{-1}\|g \|_{C^{\alpha/2,\alpha}}+a_{\alpha/2,\alpha}r^{-1}\|\nabla v\|_{L^p(D_T)}\\
&\qquad\quad+a_{\alpha/2,\alpha}r^{-1-\alpha}\|v \|_{C^{\alpha/2,\alpha}}+r^{-2}\|v\|_{C^0(\overline{D_T})}\Big].
\end{align*}
Next, we estimate the second, the sixth and the seventh term on the right hand side by \eqref{eq:estimate} and \eqref{3} and obtain
\begin{align}\label{4}
\begin{aligned}
\|v_i&\|_{C^{(1+\alpha)/2,1+\alpha}}\leq C r^{-1-\alpha}\|\nabla v\|_{L^p(D_T)}\\
&+ C\big(r^{-1-2\alpha}+r^{-2}\big)\big(\|v_{0}\|_{C^{1+\alpha}}+\|f\|_{L^p(D_T)}+\|g \|_{C^{\alpha/2,\alpha}}\big).
\end{aligned}
\end{align}
Since $p\geq 2$ we also have the elementary interpolation inequality
$$\|\nabla v\|_{L^p({D_T})}^p\leq\|\nabla v\|_{C^0(\overline{D_T})}^{p-2}\|\nabla v\|_{L^2({D_T})}^2$$
as well as the basic energy estimate
\begin{equation}\label{44}
\|v\|_{L^\infty(0,T;L^2)}+\|\nabla v\|_{L^2(D_T)}\leq C\big(\|v_0\|_{L^2}+\|g\|_{L^2(D_T)}+\|f\|_{L^2(D_T)}\big)
\end{equation}
which holds true with a constant that depends on $\nu$ but otherwise is independent of the coefficients of \eqref{eq}.
We deduce
\begin{align*}
\|\nabla v\|_{L^p(\overline{D_T})}^p&\leq C \|\nabla v\|_{C^0(\overline{D_T})}^{p-2}\big(\|v_{0}\|_{C^{1+\alpha}}+\|f\|_{L^p(D_T)}+\|g \|_{C^{\alpha/2,\alpha}}\big)^2
\end{align*}
Therefore, Young's inequality yields
\begin{align*}
C &r^{-1-\alpha}\|\nabla v\|_{L^p(D_T)}\\
&\leq C r^{-1-\alpha}\|\nabla v\|_{C^0(\overline{D_T})}^{\frac{p-2}{p}}\big(\|v_{0}\|_{C^{1+\alpha}}+\|f\|_{L^p(D_T)}+\|g \|_{C^{\alpha/2,\alpha}}\big)^\frac{2}{p}\\
&\leq Cr^{-\frac{p}{2}(1+\alpha)}\big(\|v_{0}\|_{C^{1+\alpha}}+\|f\|_{L^p(D_T)}+\|g \|_{C^{\alpha/2,\alpha}}\big)+\frac{1}{2}\|\nabla v\|_{C^0(\overline{D_T})}.
\end{align*}
Hence it follows from \eqref{4}
\begin{align}\label{5}
\begin{aligned}
\|&v_i\|_{C^{(1+\alpha)/2,1+\alpha}}\leq \frac{1}{2} \|\nabla v\|_{C^0(\overline{D_T})}\\
&+ C\big(r^{-1-2\alpha}+r^{-2}+r^{-\frac{p}{2}(1+\alpha)}\big)\big(\|v_{0}\|_{C^{1+\alpha}}+\|f\|_{L^p(D_T)}+\|g \|_{C^{\alpha/2,\alpha}}\big).
\end{aligned}
\end{align}
Let $(t_0,x_0)\in\overline{D_T}$ such that $|\nabla v(t_0,x_0)|=\|\nabla v\|_{C^0(\overline{D_T})}$ and take $i_0$ such that $(t_0,x_0)\in B_{i_0}(r/4)$. Then $\nabla v(t_0,x_0)=\nabla v_{i_0}(t_0,x_0)$. Therefore
$$\|\nabla v\|_{C^0(\overline{D_T})}\leq \|\nabla v_{i_0}\|_{C^0(\overline{D_T})}$$
and consequently
\begin{align}\label{6}
\begin{aligned}
\|&\nabla v\|_{C^0(\overline{D_T})}\\
&\leq C\big(r^{-1-2\alpha}+r^{-2}+r^{-\frac{p}{2}(1+\alpha)}\big)\big(\|v_{0}\|_{C^{1+\alpha}}+\|f\|_{L^p(D_T)}+\|g \|_{C^{\alpha/2,\alpha}}\big).
\end{aligned}
\end{align}
Plugging this back in \eqref{5} we obtain
\begin{align*}%\label{5}
\begin{aligned}
\|&v_{i}\|_{C^{(1+\alpha)/2,1+\alpha}}\\
&\leq C\big(r^{-1-2\alpha}+r^{-2}+r^{-\frac{p}{2}(1+\alpha)}\big)\big(\|v_{0}\|_{C^{1+\alpha}}+\|f\|_{L^p(D_T)}+\|g \|_{C^{\alpha/2,\alpha}}\big).
\end{aligned}
\end{align*}
In order to estimate
$$\|v\|_{(1+\alpha)/2,1+\alpha}=\|v\|_{C^0(\overline{D_T})}+\|\nabla v\|_{C^0(\overline{D_T})}+|\nabla v|_{\alpha/2,\alpha},$$
we apply \eqref{3}, \eqref{6} and the inequality
$$|\nabla v|_{\alpha/2,\alpha}\leq \max\big\{\max_i|\nabla v_i|_{{\alpha/2,\alpha}},2^{1+2\alpha}r^{-\alpha}\| \nabla v\|_{C^0(\overline{D_T})}\big\}
$$
which is easily obtained by considering the points where the maximum in the definition of $|\nabla v|_{\alpha/2,\alpha}$ is reached. 
Indeed, if $(t,x)$ and $(s,y)$ are such points, then $|\nabla v|_{\alpha/2,\alpha}$ is bounded by
$$2^{1+2\alpha}r^{-\alpha}\|\nabla v\|_{C^0(\overline{D_T})}\quad\text{provided}\quad d((t,x),(s,x))\geq \frac{r}{4}$$
and by $|\nabla v_i|_{\alpha/2,\alpha}$ otherwise, where $i$ is such that $(t,x),(s,y)\in B_i(r/2)$. We deduce
\begin{align*}
\|&v\|_{(1+\alpha)/2,1+\alpha}\\
&\leq C(r^{-1-3\alpha}+r^{-2-\alpha}+r^{-\frac{p+\alpha p+2\alpha}{2}}\big)\big(\|v_{0}\|_{C^{1+\alpha}}+\|f\|_{L^p(D_T)}+\|g \|_{C^{\alpha/2,\alpha}}\big)
\end{align*}
which implies \eqref{eq:schest}.
\end{proof}
\end{theorem}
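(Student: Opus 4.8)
The plan is to obtain the existence, uniqueness and the qualitative regularity $v\in C^{(1+\alpha)/2,1+\alpha}(\overline{D_T})$ directly from the Schauder theory of \cite[Theorem 6.48]{lieberman}, the compatibility condition $v_0=0$ on $\partial D$ guaranteeing that the parabolic boundary data lie in the correct H\"older class $C^{(1+\alpha)/2,1+\alpha}$. The only genuinely new point is the \emph{explicit polynomial dependence} of the constant on $a_{\alpha/2,\alpha}=\|a\|_{C^{\alpha/2,\alpha}}$, which the cited reference does not record, so the whole effort goes into tracking this dependence through a freezing-of-coefficients argument.

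First I would cover $\overline{D_T}$ by finitely many parabolic cylinders $B_i(r/4)$ of radius $r\in(0,1]$ (to be chosen) with centers $(t_i,x_i)$, and pick cutoffs $\varphi_i\in C_c^\infty(B_i(r))$ that equal $1$ on $B_i(r/2)$ and satisfy $\|\nabla\varphi_i\|_{C^0}\lesssim r^{-1}$, $\|\partial_t\varphi_i\|_{C^0}\lesssim r^{-2}$ and $\|\nabla\varphi_i\|_{C^{\alpha/2,\alpha}}\lesssim r^{-1-\alpha}$. Setting $a_i=a(t_i,x_i)$ and $v_i=\varphi_i v$, the localized function $v_i$ solves a \emph{constant-coefficient} parabolic equation with frozen matrix $a_i$, forced by the commutator $\diver[(a-a_i)\nabla v_i]$ together with lower-order terms produced by the cutoff. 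Applying the constant-coefficient Schauder estimate to each $v_i$ — its constant $C_0$ depends on $a$ only through the ellipticity bounds $\nu,\mu$ — reduces everything to estimating these source terms.

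The heart of the matter is the commutator $\|(a-a_i)\nabla v_i\|_{C^{\alpha/2,\alpha}}$. Since $a$ oscillates by at most $a_{\alpha/2,\alpha}r^\alpha$ on $B_i(r)$, this term is bounded by $a_{\alpha/2,\alpha}(2r^\alpha\|v_i\|_{C^{(1+\alpha)/2,1+\alpha}}+Cr^{-1}\|v_i\|_{C^0})$ once I invoke the interpolation inequality $\|v_i\|_{C^{1/2,1}}\le\varepsilon\|v_i\|_{C^{(1+\alpha)/2,1+\alpha}}+C\varepsilon^{-1/\alpha}\|v_i\|_{C^0}$ of \cite[Lemma 3.2, Chapter II]{lady} with $\varepsilon=r^\alpha$. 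Choosing $r=(4C_0 a_{\alpha/2,\alpha})^{-1/\alpha}\wedge 1$ then makes the leading coefficient of $\|v_i\|_{C^{(1+\alpha)/2,1+\alpha}}$ at most $1/2$, so this dangerous term can be absorbed into the left-hand side. The remaining source terms carry negative powers of $r$ multiplying the lower-order quantities $\|v\|_{C^0}$, $\|v\|_{C^{\alpha/2,\alpha}}$ and $\|\nabla v\|_{L^p}$; the first two I would bound by the weaker H\"older estimate \eqref{eq:estimate} of Theorem \ref{thm:lady01}, whose constant again depends on $a$ only through $\nu$.

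The one subtlety left is the gradient term $\|\nabla v\|_{L^p}$. I would interpolate $\|\nabla v\|_{L^p}^p\le\|\nabla v\|_{C^0}^{p-2}\|\nabla v\|_{L^2}^2$, control $\|\nabla v\|_{L^2}$ by the basic energy estimate (whose constant depends only on $\nu$), and then use Young's inequality to split off $\tfrac12\|\nabla v\|_{C^0}$. Since the global supremum of $|\nabla v|$ is attained at a point lying in some $B_{i_0}(r/4)$, where $\nabla v=\nabla v_{i_0}$, the local bounds upgrade to a global bound on $\|\nabla v\|_{C^0}$, and the absorbed half is moved back. Reassembling the pieces and substituting $r\sim a_{\alpha/2,\alpha}^{-1/\alpha}$ turns every negative power $r^{-\theta}$ into $a_{\alpha/2,\alpha}^{\theta/\alpha}$, which yields the claimed polynomial $P_1(a_{\alpha/2,\alpha})$ and the estimate \eqref{eq:schest}. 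I expect the main obstacle to be this simultaneous double absorption — of the commutator and of $\|\nabla v\|_{C^0}$ — carried out so that the bookkeeping of powers of $r$ produces a genuine polynomial rather than an uncontrolled growth in $a_{\alpha/2,\alpha}$.
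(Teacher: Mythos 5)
Your proposal is correct and follows essentially the same route as the paper's proof: localization by parabolic cylinders with the same cutoff bounds, freezing of coefficients, absorption of the commutator via the interpolation inequality of \cite[Lemma 3.2, Chapter II]{lady} with $\varepsilon=r^\alpha$ and the choice $r=(4C_0\,a_{\alpha/2,\alpha})^{-1/\alpha}\wedge 1$, control of $\|\nabla v\|_{L^p}$ by interpolation with the energy estimate plus Young's inequality, and the point-of-maximum argument to globalize $\|\nabla v\|_{C^0(\overline{D_T})}$ before reassembling, exactly as in the paper (which adapts \cite[Theorem 3.1]{juliaarnaud}). The only detail you leave implicit is the embedding $L^p(D_T)\hookrightarrow M^{1,N+1+\alpha}(D_T)$ for $p\geq\frac{N+2}{1-\alpha}$, used to pass from the Morrey-space norm in \cite[Theorem 6.48]{lieberman} to the $L^p$ norm of $f$, and the patching of the H\"older seminorm $|\nabla v|_{\alpha/2,\alpha}$ from the local seminorms $|\nabla v_i|_{\alpha/2,\alpha}$; both are routine and consistent with your outline.
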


%\begin{remark}
%This proof is almost self contained, we have used \cite{lieberman} only to have the estimate \eqref{e3.8} for equations with constant coefficients.. 
%Using similar argument as in Lemma \ref{l3.4} below, we could provide a proof of this result. This 
%would need a generalization of (2.1) from Chapter IV of \cite{lady} to equations with a divergence
%on the right hand side.
%\end{remark}

We now give higher regularity results for equation in non divergence form. We consider a linear parabolic PDE of the form
\begin{equation}\label{eq-bis}
\left\{
\begin{aligned}
\partial_t v&=\sum_{i,j=1}^N a_{ij}(t,x)\partial_{i,j} v+f(t,x),&\text{ in }&D_T,\\
v&=0,&\text{ on }&S_T,\\
v(0)&=v_0,&\text{ in }&D,
\end{aligned}
\right.
\end{equation}
and assume that the matrix $a(t,x)$ is symmetric and that there exist $\nu,\mu>0$ such that $\nu|\xi|^2\leq a(t,x)\xi\cdot\xi\leq\mu|\xi|^2$, for all $(t,x)\in D_T$, $\xi\in\mr^N$.

We use a similar approach to obtain regularity as for the proof of Theorem \ref{thm:schauder}. 
In particular, this enables us  to quantify the constant with respect to the
parameters. 
We define $\cA$ as the second order  differential operator: $\cA w= \sum_{i,j=1}^N a_{i,j} \partial_{i,j}w$.

% We set in the following result: $\R^{N,+}=\{x\in \R^N:\; x_N\ge 0\}$, $\R^{N,++}=\{x\in \R^N:\; x_N>0\}$. Note that its boundary  is $\R^{N-1}$. As usual, for $x=(x_1,\dots,x_N)\in \R^N$, we set
% $x'=(x_1,\dots,x_{N-1})$. Also, given a function $g$ defined on $\R^{N,+}$, we denote by $g|_{x_N=0}$ its restriction to $\{x_N=0\}$. 
 
 We first recall low regularity in the parabolic scaling.
  \begin{theorem}\label{t3.3}
 Let $\alpha \in(0,1)$, $k=0,1$ and assume that $D$ has a $C^{\alpha+k+2}$ boundary. Consider the equation:
\begin{equation}\label{e3.17}
\left\{
\begin{aligned}
\partial_t v-\cA v &= f,&\text{ in }&D_T,\\
v&={\phi},&\text{ on }&S_T,\\
v(0)&=v_{0},&\text{ in }&D.
\end{aligned}
\right.
\end{equation}
Assume that 
\begin{itemize}
\item[(i)]  $f\in C^{(\alpha+k)/2,\alpha+k}( \overline{D_T})$, $a\in C^{(\alpha+k)/2,\alpha+k}(\overline{D_T})$.
\item[(ii)] $v_0\in C^{\alpha+k+2}(\overline D)$, % for some $\iota>0$ 
 \item[(iii)] $\phi \in C^{(\alpha+k)/2+1,\alpha+k+2}(\overline{S_T})$ 
 \item[(iv)] $v_0=\phi $, $\partial_t\phi -\cA v_0=f$ on $\{0\}\times \partial D$.
\end{itemize}
Then there exists a unique solution $v\in C^{(\alpha+k)/2+1, \alpha+k+2}(\overline{D_T})$ and
\begin{equation}\label{truc}
\begin{split}
\|v\|_{ C^{(\alpha+k)/2+1, \alpha+k+2}} &\le 
K_3 P_2(A_{\alpha+k})\big(\|f\|_{C^{(\alpha+k)/2,\alpha+k}}+\|v_0\|_{C^{\alpha+k+2}(\overline D)}\\
&\; +\|\phi\|_{C^{(\alpha+k)/2+1,\alpha+k+2}(\overline{S_T})}\big),
\end{split}
\end{equation}
where $K_3$ depends only on $\mu$, $\nu$, $D$, $\alpha$, $P_2$ is a polynomial and 
$A_{\alpha+k}=\|a\|_{C^{(\alpha+k)/2,\alpha+k}}$.
\end{theorem}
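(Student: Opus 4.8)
\emph{Existence and uniqueness} of a solution $v\in C^{(\alpha+k)/2+1,\alpha+k+2}(\overline{D_T})$ follow from the classical Schauder theory for parabolic equations in non-divergence form, see \cite[Chapter IV]{lady}; the compatibility conditions (iv) are precisely those guaranteeing that the solution attains the full regularity up to the parabolic corner $\{0\}\times\partial D$, while uniqueness is a consequence of the maximum principle. The only point not explicit in \cite{lady} is the polynomial dependence of the constant on $A_{\alpha+k}=\|a\|_{C^{(\alpha+k)/2,\alpha+k}}$, and establishing this is the content of the argument below, which follows the freezing-of-coefficients scheme already used in the proof of Theorem \ref{thm:schauder} and in \cite[Theorem 3.1]{juliaarnaud}.

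The plan is to localize. Fix $r\in(0,1]$ to be chosen later, cover $\overline{D_T}$ by parabolic balls $(B_i(r/4))_{i=1,\dots,n}$ with centers $(t_i,x_i)$, and take a subordinate partition of unity $(\varphi_i)$ with $\varphi_i\equiv1$ on $B_i(r/2)$, $\supt\,\varphi_i\subset B_i(r)$ and the scaled bounds $\|\partial_t\varphi_i\|_{C^0}\le Cr^{-2}$, $\|\varphi_i\|_{C^{(\alpha+k)/2+1,\alpha+k+2}}\le Cr^{-(\alpha+k+2)}$. Freezing the coefficients by setting $\cA_i=\sum_{j,l}a_{jl}(t_i,x_i)\,\partial_j\partial_l$, the function $v_i=\varphi_i v$ solves the constant-coefficient problem
\begin{equation*}
\partial_t v_i-\cA_i v_i=(\cA-\cA_i)v_i+\varphi_i f+v\,\partial_t\varphi_i-2\sum_{j,l}a_{jl}\,\partial_j\varphi_i\,\partial_l v-v\,\cA\varphi_i
\end{equation*}
in $D_T$, with boundary value $\varphi_i\phi$ on $S_T$ and initial value $\varphi_i(0)v_0$. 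Since $\cA_i$ has constant coefficients obeying the same ellipticity bounds, the Schauder estimate for \eqref{eq-bis} with \emph{constant} coefficients applies with a constant $C_0$ depending only on $N,\nu,\mu,D,\alpha$; this uniform constant is the key input keeping the $a$-dependence confined to the terms treated by hand.

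It remains to estimate the right-hand side in $C^{(\alpha+k)/2,\alpha+k}$. On $\supt\,v_i\subset B_i(r)$ one has $\|a-a_i\|_{C^0}\le A_{\alpha+k}\,r^{\theta}$ with $\theta=\min(1,\alpha+k)>0$, so the leading contribution of $(\cA-\cA_i)v_i$ is bounded by $C A_{\alpha+k}r^{\theta}\|v_i\|_{C^{(\alpha+k)/2+1,\alpha+k+2}}$, the remaining pieces being of lower order in the derivatives of $v_i$ and absorbable by the interpolation inequality \cite[Lemma 3.2, Chapter II]{lady} at the cost of negative powers of $r$; the commutator terms in the display are controlled likewise through the derivative bounds on $\varphi_i$. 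Choosing $r=(4C_0A_{\alpha+k})^{-1/\theta}\wedge1$ makes the coefficient of the leading top-order term at most $1/2$, so that it can be absorbed into the left-hand side.

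Finally, one passes from the local bounds to the global one. The maximum principle (comparing $v$ with the spatially constant supersolution $\sup|v_0|+\sup_{S_T}|\phi|+t\,\sup|f|$) yields $\|v\|_{C^0}\le\|v_0\|_{C^0}+\|\phi\|_{C^0}+T\|f\|_{C^0}$, a bound \emph{independent of $a$}, which feeds the absorbable lower-order terms. To recover $\|\partial_t v\|_{C^0}$, $\|D^2 v\|_{C^0}$ and their parabolic Hölder seminorms (and, when $k=1$, the third spatial derivatives) from the $\|v_i\|$ we use the dichotomy already exploited in Theorem \ref{thm:schauder}: for sup-norms we locate the maximizing point inside some $B_{i_0}(r/4)$, where the relevant derivative of $v$ agrees with that of $v_{i_0}$; for seminorms we split pairs of points according to whether their parabolic distance exceeds $r/4$ (controlled by $r^{-\alpha}\|\cdot\|_{C^0}$) or not (controlled by a single local seminorm). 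Summing the local estimates and substituting $r=(4C_0A_{\alpha+k})^{-1/\theta}\wedge1$ turns every negative power $r^{-\gamma}$ into a power of $A_{\alpha+k}$, which gives \eqref{truc} with $P_2$ a polynomial and $K_3$ depending only on $\mu,\nu,D,\alpha$. The main technical difficulty is the careful bookkeeping of the higher-order parabolic Hölder norms in the freezing step—especially in the case $k=1$, where $\alpha+k>1$ and $a$ itself carries a spatial derivative—together with the inhomogeneous datum $\phi$, whose cut-off $\varphi_i\phi$ must be carried through the constant-coefficient estimate while preserving the corner compatibility inherited from (iv).
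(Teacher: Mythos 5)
Your proposal is correct and takes essentially the same route as the paper's own proof: qualitative existence, uniqueness and regularity from Theorem 5.2, Chapter IV of \cite{lady} (with (iv) as the first-order compatibility condition), followed by the freezing-of-coefficients covering argument of Theorem \ref{thm:schauder} — constant-coefficient Schauder bound with uniform $C_0$, interpolation to absorb lower-order terms, the choice $r\sim A_{\alpha+k}^{-1/\theta}$, the maximum principle for non-divergence form equations, and the sup-norm/seminorm dichotomy to assemble the local bounds. One harmless slip: for $k=1$, since $a\in C^{(\alpha+1)/2,\alpha+1}$ means $a,\nabla a\in C^{\alpha/2,\alpha}$, the time increments on a parabolic ball of radius $r$ only give $\|a-a_i\|_{C^0(B_i(r))}\le C A_{\alpha+1}\,r^{\alpha}$ rather than $r^{\min(1,\alpha+1)}=r$, but any positive power of $r$ suffices for the absorption and this merely changes the degree of the polynomial $P_2$.
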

%\begin{remark}
%Recall that with our definition, $C^{(\alpha+1)/2,\alpha+1}( \overline{D_T})=C^{\alpha/2,\alpha+1}( \overline{D_T})$
%\end{remark}
\begin{proof}According to   Theorem 5.2, chapter IV in \cite{lady}, one obtains the required regularity of $v$.
 Indeed, $H^{l,l/2}$ in \cite{lady} corresponds to $C^{l/2,l}$ in our 
 notations and (iv) is the compatibility condition of order one. 
But as in Theorem \ref{thm:schauder}, the dependence on $A_{\alpha+k}$ is not obvious, we proceed 
similarly.

Take $r\in(0,1]$ to be fixed. Let $(B_i(r/4))_{i=1,\dots,n}$ be a covering of $\overline{D_T}$ by parabolic cylinders of radius $r$, i.e. balls of radius $r$ with respect to the parabolic distance,
and center $(t_i,x_i)$. Let
$$\varphi_i\in C^\infty_c(B_i(r))\text{ such that } 0\leq \varphi_i\leq 1\text{ and }\varphi_i\equiv 1 \text{ on } B_i(r/2),$$
such that
$$\|\nabla \varphi_i\|_{C^0(\overline{D_T})}\leq C r^{-1},\quad\|\partial_t\varphi_i\|_{C^0(\overline{D_T})}\leq C r^{-2},$$
$$\|\nabla\varphi_i\|_{C^{\alpha/2,\alpha}}\leq C r^{-1-\alpha},\quad\|\partial^\gamma \varphi_i\|_{C^{\alpha/2,\alpha}}\leq C r^{-2-\alpha},$$
for any multi-index $\gamma$ of length $|\gamma|=2$.

Set $\cA_i=\cA(t_i,x_i), \,v_i=\varphi_i v,\, v_{0,i}=\varphi_i(0) v_0$, $\phi_i=\varphi_i\phi$. Then $v_i$ solves the following parabolic equation
\begin{equation}\label{eq:const2}
\left\{
\begin{aligned}
\partial_t v_i-  \cA_i v_i&=v\partial_t \varphi_i+ (\cA-\cA_i)v_i - v\cA \varphi_i \\
&\quad-2\sum_{l,m=1}^N a_{lm}\partial_l v\partial_m \varphi_i+ f\varphi_i,&\text{ in }&D_T,\\
v_i&=\phi_i,&\text{ on }&S_T,\\
v_i(0)&=v_{0,i},&\text{ in }&D.
\end{aligned}
\right.
\end{equation}
This equation is similar to \eqref{e3.17} but the operator has constant coefficients. It is easy to see
that it satisfies the compatibility condition of order one.
Thus, according to Theorem 5.2, chapter IV in \cite{lady}, we have
\begin{align*}
\begin{aligned}
\|v_i\|_{C^{(\alpha+k)/2+1,\alpha+k+2}}&\leq C\big(\|v_{0,i}\|_{C^{\alpha+k+2}(\overline D)}+\|\phi_i\|_{C^{(\alpha+k)/2+1,\alpha+k+2}(\overline{S_T})}\\
&\quad+\|v\partial_t \varphi_i+ (\cA-\cA_i)v_i - v\cA \varphi_i \\
&\quad-2\sum_{l,m=1}^N a_{lm}\partial_l v\partial_m \varphi_i+ f\varphi_i\|_{C^{(\alpha+k)/2,\alpha+k}}\big),
\end{aligned}
\end{align*}
where the constant depends only on $N,\nu,\mu,D,\alpha$. To see this, one follows the proof of Theorem 5.2, Chapter IV in \cite{lady} for the case of constant coefficients. First, the equation is transformed by a local change of coordinates into a heat equation on the half space. In the case of zero initial data (see the problem (5.3)' in Chapter IV in \cite{lady}) one obtains the above estimate in Theorem 6.1, Chapter IV in \cite{lady}. Reduction of the problem to the problem with zero initial data is based on Theorem 4.3, Chapter IV in \cite{lady} and the method is explained in Section 8, Chapter IV in \cite{lady}. 

The end of the proof is exactly as above for 
Theorem \ref{thm:schauder} and uses the maximum principle for parabolic equations in non divergence form, see e.g. \cite[Theorem 8, Chapter 2]{friedman}.
\end{proof}
%Note that the proof of this result in the case of the half space $\R^{N,+}=\{x\in \R^N:\; x_N\ge 0\}$  and the Laplace operator is a direct consequence of (2.1), (2.2), (2.4) chapter IV of \cite{lady}. Indeed, it is 
%proved in section 1, chapter IV that in this case \rmk{should be $\partial_N=\partial_{x_N}$, right?}
%$$
%v=\Gamma*\tilde f +\Gamma*_1 \tilde v_0+2 \partial_n \Gamma *_2(-\phi+\Gamma*\tilde f|_{x_N=0} +\Gamma*_1 \tilde v_0|_{x_N=0}),
%$$
%where $\tilde f$ and $\tilde v_0$ are smooth extensions of $f$
%and $v_0$ to $\R^N$. As in \cite{lady}, $\Gamma$ is the heat kernel, $*$ (resp. $*_1$, $*_2$)
%is the convolution in the variable $(x,t)$ (resp. $x$; $(x',t)\rmk{=(x_1,\dots,x_{N-1},t)}$). The argument in \cite{lady} consists 
%then in straightening the boundary and freezing the coefficient. Unfortunately, the proof in 
%\cite{lady} does not explicit the dependance of the constants in terms of the coefficients $a_{ij}$ and is only done in the case of the parabolic scaling.
%
%\medskip

We now study  higher regularity. Since the application we have in mind is SPDEs, we do not have 
more than $1/2$ regularity in time. Hence, to increase the spatial regularity, we cannot use the 
parabolic scaling anymore. Unfortunately, all the results in the literature are in the parabolic 
scaling. To avoid lengthy technical proofs, we only investigate the first step: spatial regularity of 
order $k+\alpha$, $k= 4$, $\alpha <1$. Similar arguments can be used to get higher regularity. 

\begin{theorem}\label{t3.4}
Let $\alpha \in(0,1)$,  and assume that $D$ has a $C^{\alpha+4}$-boundary. Let $v$ be the solution
of 
\begin{equation*}
\left\{
\begin{aligned}
\partial_t v-\cA v &= f,&\text{ in }&D_T,\\
v&=0,&\text{ on }&S_T,\\
v(0)&=v_{0},&\text{ in }& D.
\end{aligned}
\right.
\end{equation*}
Assume that 
\begin{itemize}
\item[(i)]  $f\in C^{\alpha/2,\alpha+2}( \overline{D_T})$, $f|_{\overline{S_T}}\in C^{\alpha/2+1,\alpha+2}(\overline{S_T})$,   $a\in C^{\alpha/2,\alpha+2}( \overline{D_T})$, and  $a|_{\overline{S_T}}\in C^{\alpha/2+1,\alpha+2}( \overline{S_T})$.
\item[(ii)] $v_0\in C^{\alpha+4}(\overline D)$, % for some $\iota>0$ 
 \item[(iii)] $v_0=0$, $\cA v_0+f=0$, and  $\partial_t f +\cA^2v_0+\cA f+ (\partial_t \cA )v_0=0$, on 
 $\{0\}\times \partial D$.

\end{itemize}
then $v\in C^{\alpha/2+1, \alpha+4}(\overline{D_T})$ and
\begin{equation}\label{est345}
\begin{split}
\|v\|_{ C^{\alpha/2+1, \alpha+4}} &\le
K_4 P_3(A_{\alpha+2}) \big(\|f\|_{C^{\alpha/2,\alpha+2}}\\
&\quad+ \|f|_{{S_T}}\|_{ C^{\alpha/2+1,\alpha+2}(\overline{S_T})}+\|v_0\|_{C^{\alpha+4}(\overline D)}\big),
\end{split}
\end{equation}
where $P_3$ is a polynomial, $A_{\alpha+2}=\|a\|_{C^{\alpha/2,\alpha+2}}
+\|a|_{\overline{S_T}}\|_{ C^{\alpha/2+1,\alpha+2}( \overline{S_T})}$ and $K_4$ depends only on $\alpha$, $\nu$, $\mu$, $D$.%, $\delta$.
\end{theorem}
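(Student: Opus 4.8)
The plan is to reduce the statement to a purely spatial bound and then to exploit, as in Theorems~\ref{thm:schauder} and~\ref{t3.3}, a freezing-of-coefficients argument, the new point being that the parabolic scaling must be abandoned. First I would observe that it is enough to prove $v\in C^{\alpha/2,\alpha+4}(\overline{D_T})$, i.e. $\partial^\gamma v\in C^{\alpha/2,\alpha}$ for all $|\gamma|\le 4$. Indeed, the missing terms of the norm $\|v\|_{C^{\alpha/2+1,\alpha+4}}$, namely $\partial_t\partial^\gamma v$ with $|\gamma|\le 2$, are then furnished directly by the equation: writing $\partial_t v=\cA v+f$ and using that $a\in C^{\alpha/2,\alpha+2}$, that $\partial_{lm}v\in C^{\alpha/2,\alpha+2}$, and that $C^{\alpha/2,\alpha+2}(\overline{D_T})$ is a Banach algebra, one gets $\partial_t v=\cA v+f\in C^{\alpha/2,\alpha+2}$, hence $\partial_t\partial^\gamma v\in C^{\alpha/2,\alpha}$ for $|\gamma|\le 2$, with a bound controlled by the spatial norm and $\|f\|_{C^{\alpha/2,\alpha+2}}$. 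No time regularity beyond what the equation produces is ever needed, which is exactly why the classical parabolic scaling of \cite{lady} cannot be used here.

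To gain the fourth spatial derivative I would differentiate the equation once in space. Applying Theorem~\ref{t3.3} with $k=1$ to $v$ itself already gives $v\in C^{(\alpha+1)/2+1,\alpha+3}$, the order-one compatibility condition being precisely $\cA v_0+f=0$ on $\{0\}\times\partial D$ from hypothesis~(iii); this provides the bounds on $D^2v$ and $D^3v$ used below. For a unit vector $e_k$ set $w=\partial_k v$, which solves
\begin{equation*}
\partial_t w-\cA w=\partial_k f+(\partial_k\cA)v,\qquad (\partial_k\cA)v=\sum_{l,m}(\partial_k a_{lm})\,\partial_{lm}v .
\end{equation*}
The right-hand side lies in $C^{(\alpha+1)/2,\alpha+1}$: indeed $\partial_k f,\partial_k a\in C^{\alpha/2,\alpha+1}=C^{(\alpha+1)/2,\alpha+1}$ by the identification recalled after the definition of the H\"older spaces, $D^2v\in C^{(\alpha+1)/2,\alpha+1}$ by the previous step, and the product stays H\"older. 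A second application of Theorem~\ref{t3.3} with $k=1$, now to $w$, then yields $\partial_k v\in C^{(\alpha+1)/2+1,\alpha+3}$, so that every fourth order derivative $\partial^\beta v$ whose multi-index $\beta$ contains the direction $e_k$ belongs to $C^{\alpha/2,\alpha}$.

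The whole difficulty is at the boundary. In the interior, and for directions tangent to $\partial D$ after flattening it by local coordinates, the trace of $w=\partial_k v$ on $S_T$ vanishes, being a tangential derivative of $v\equiv 0$; moreover the compatibility condition required for $w$ is just the tangential derivative of $\cA v_0+f=0$, hence automatic, and the argument above applies with zero lateral data. The purely normal derivatives $\partial_\nu^3 v$ and $\partial_\nu^4 v$ near $S_T$ are not reachable this way, and this is where the hypotheses $f|_{\overline{S_T}},\,a|_{\overline{S_T}}\in C^{\alpha/2+1,\alpha+2}(\overline{S_T})$ are used: on $S_T$ one has $\partial_t v=0$, so $\cA v=-f$, and ellipticity $a_{\nu\nu}\ge\nu$ allows one to solve this relation for $\partial_\nu^2 v$ in terms of $f$ and of tangential derivatives already controlled. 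Differentiating this relation tangentially and in time, where the enhanced $\alpha/2+1$ boundary regularity of $f$ and $a$ is exactly what is needed, and reusing $\cA v=-f$ successively, reconstructs $\partial_\nu^3 v$ and $\partial_\nu^4 v$ on $S_T$ with the required $C^{\alpha/2,\alpha}$ regularity. The two corner conditions of hypothesis~(iii), $\cA v_0+f=0$ and $\partial_t f+\cA^2 v_0+\cA f+(\partial_t\cA)v_0=0$ on $\{0\}\times\partial D$, are nothing but $\partial_t v=0$ and $\partial_t^2 v=0$ at the corner as computed from the equation, and they ensure that these reconstructed traces match continuously there, so that the constant-coefficient half-space estimates of \cite[Chapter~IV]{lady} underlying Theorem~\ref{t3.3} remain applicable.

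Finally, to glue the interior and boundary pieces and to track the polynomial dependence on $A_{\alpha+2}$, I would reproduce the scheme of Theorems~\ref{thm:schauder} and~\ref{t3.3}: cover $\overline{D_T}$ by parabolic cylinders of radius $r$, multiply by a partition of unity $\varphi_i$, freeze $\cA$ at the centers $(t_i,x_i)$, estimate each localized piece $\varphi_i\partial_k v$ (or $\varphi_i v$) by the constant-coefficient bound just described, absorb the commutator $(\cA-\cA_i)$ term by choosing $r\simeq (C\,A_{\alpha+2})^{-1/\alpha}\wedge 1$, and dominate the remaining lower order terms by H\"older interpolation together with the maximum principle and the elementary energy estimates used in Theorem~\ref{t3.3}. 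Summing over $i$ and inserting the time derivative through the equation gives \eqref{est345} with $P_3$ polynomial. I expect the genuine obstacle to be entirely in the normal-derivative reconstruction at the boundary and in checking that it respects the polynomial-in-$A_{\alpha+2}$ bookkeeping; the tangential and interior directions are routine once Theorem~\ref{t3.3} is available.
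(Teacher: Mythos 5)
Your reduction to spatial bounds, your use of Theorem \ref{t3.3} with $k=1$ applied first to $v$ and then to tangential derivatives $\partial_k v$ after flattening, and your final localization/freezing scheme with the polynomial bookkeeping in $A_{\alpha+2}$ all track the paper's strategy (the paper proves Lemma \ref{l3.4} for the half-space Laplacian, extends it to constant coefficients in Lemma \ref{l3.5}, and then follows the covering/approximate-inverse argument of \cite[Chapter IV, Section 7]{lady}). The genuine gap is at the one point you yourself flag as the crux: the purely normal derivatives at the boundary. The paper's decisive device is \emph{not} a reconstruction of the traces of $\partial_\nu^3 v$ and $\partial_\nu^4 v$ on $S_T$, but the observation that $w=\partial_\nu^2 v$ itself solves a full initial--boundary value problem with \emph{known} data: in the half-space model, $w=\partial_t v-\Delta_{x'}v-f$, hence $w|_{x_N=0}=-f$, and $w$ solves $\partial_t w-\Delta w=\partial_N^2 f$ with lateral Dirichlet data $\phi=-f$; Theorem \ref{t3.3} is then applied to this $w$-problem, and this is exactly where the enhanced hypothesis $f|_{\overline{S_T}}\in C^{\alpha/2+1,\alpha+2}(\overline{S_T})$ (needed because $\phi$ must have time regularity $\alpha/2+1$ in Theorem \ref{t3.3}(iii)) and the third compatibility condition of (iii) (which is the order-one compatibility condition for the $w$-problem) are consumed. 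Your proposal instead identifies the trace relation $\cA v=-f$ on $S_T$, solves it for $\partial_\nu^2 v|_{S_T}$ --- correct so far --- but then claims that differentiating this relation tangentially and in time, ``reusing $\cA v=-f$ successively,'' reconstructs $\partial_\nu^3 v$ and $\partial_\nu^4 v$ on $S_T$. That step fails: tangential and time differentiation of an identity on $S_T$ can never produce a normal derivative. To reach $\partial_\nu^3 v|_{S_T}$ and $\partial_\nu^4 v|_{S_T}$ you must differentiate the equation in the normal direction and restrict to the boundary, which brings in $\partial_t\partial_\nu v|_{S_T}$ and $\partial_t\partial_\nu^2 v|_{S_T}$. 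The first is indeed available from your application of Theorem \ref{t3.3} with $k=1$ (and in fact $\partial_\nu^3 v$ is already globally controlled by that step, contrary to your remark that it is unreachable), but $\partial_t\partial_\nu^2 v|_{S_T}$ is precisely one of the components of the target norm $\|v\|_{C^{\alpha/2+1,\alpha+4}}$: the reconstruction of the fourth normal derivative is circular.

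There is a second, related defect: even if the traces of $\partial_\nu^3 v$ and $\partial_\nu^4 v$ on $S_T$ were in hand with $C^{\alpha/2,\alpha}(\overline{S_T})$ regularity, boundary traces alone do not yield H\"older estimates up to the boundary on $\overline{D_T}$; some parabolic boundary-value estimate for a problem with fully known data must be run, and your appeal to ``the constant-coefficient half-space estimates underlying Theorem \ref{t3.3} remain applicable'' never sets that problem up. The repair is exactly the paper's Lemma \ref{l3.4}: keep your tangential step (differentiating once instead of the paper's twice is a harmless variant --- it controls all fourth derivatives with at least one tangential index, leaving only $\partial_\nu^4 v$), and replace the trace reconstruction by the application of Theorem \ref{t3.3} to the problem solved by $w=\partial_\nu^2 v$ with lateral data $-f$ (modulo lower-order, already-controlled terms in the variable-coefficient case), verifying its order-one compatibility condition via the third condition in (iii). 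With that substitution, your localization, commutator absorption via $r\simeq(CA_{\alpha+2})^{-1/\alpha}\wedge 1$, and the recovery of the time derivatives $\partial_t\partial^\gamma v$, $|\gamma|\le 2$, from the equation are consistent with the paper's proof and with the stated estimate \eqref{est345}.
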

\begin{remark}
It might seem strange that we need to assume more time regularity for $f$ on the boundary 
$S_T$. However, it is immediate that a solution in $C^{\alpha/2+1, \alpha+4}(\overline D_T)$ satisfies,  
$-\cA v = f$ on $S_T$. Thus, $f|_{S_T}\in C^{\alpha/2+1,\alpha+2}(\overline{S_T})$. Concerning the compatibility 
conditions (iii), the first two are already in Theorem \ref{t3.3}. Since the equation holds up to the boundary, we have $-\cA v=f$ on $\overline{S_T}$. Also, the solution is sufficiently regular to differentiate it and get $\partial_t (\cA v) -(\partial_t \cA) v  -\cA^2 v =\cA f$ so that on $\overline{S_T}$:
$- \partial_t f -(\partial_t \cA) v  -\cA^2 v =\cA f$ and for $t=0$ we see that the third condition is also
necessary.

Note that this third compatibility condition is exactly the classical second order condition required 
to get smoothness in the parabolic scaling (see \cite{lady}, section 5, chapter IV. In particular (5.6) and below).
\end{remark}

 We set in the following result: $\R^{N,+}=\{x\in \R^N:\; x_N\ge 0\}$, $\R^{N,++}=\{x\in \R^N:\; x_N>0\}$. Note that its boundary  is $\R^{N-1}$. As usual, for $x=(x_1,\dots,x_N)\in \R^N$, we set
 $x'=(x_1,\dots,x_{N-1})$. Also, given a function $g$ defined on $\R^{N,+}$, we denote by $g|_{x_N=0}$ its restriction to $\{x_N=0\}$. 
 
 The proof of Theorem \ref{t3.4} follows the lines of the proof of Theorem 5.2, chapter IV in \cite{lady}. 
It is more complicated than  the proof of Theorem \ref{t3.3} since we cannot base our argument on 
known results. We start with the case of the Laplace operator on the half space. The following Lemma 
provides the missing ingredient to reproduce the argument of \cite{lady} which is based on inequalities 
(2.1), (2.2), (2.3) from Chapter IV.  Inequality \eqref{est346} is the generalization of these to indices that are not 
in the parabolic scaling.
  
\begin{lemma}\label{l3.4}
Let $\alpha \in(0,1)$ and $v$ be the solution
of 
\begin{equation*}
\left\{
\begin{aligned}
\partial_t v-\Delta v &= f,&\text{ in }&\R^{N,++}\times (0,T],\\
v&=0,&\text{ on }&\R^{N-1}\times (0,T],\\
v(0)&=v_{0},&\text{ in }&\R^{N,++}.
\end{aligned}
\right.
\end{equation*}
Assume that 
\begin{itemize}
\item[(i)]  $f\in C^{\alpha/2,\alpha+2}( \R^{N,+} \times [0,T])$  and $f|_{x_N=0}\in C^{\alpha/2+1, \alpha+2}( \R^{N-1} \times [0,T])$,
\item[(ii)] $v_0\in C^{\alpha+4}(\R^{N,+})$, % for some $\iota>0$ 
 \item[(iii)] $v_0=0$, $-\Delta v_0=f$ and $\partial_tf+\Delta^{2}v_0+\Delta f=0$ for $x_N=0,\; t=0$.

\end{itemize}
then $v\in C^{\alpha/2+1, \alpha+4}(\R^{N,+} \times [0,T])$ and
\begin{equation}\label{est346}
\begin{split}
\|v\|_{ C^{\alpha/2+1, \alpha+4}(\R^{N,+} \times [0,T])} &\le
K_5 \big(\|f\|_{C^{\alpha/2,\alpha+2}( \R^{N,+} \times [0,T])} \\
&+ \| f\|_{ C^{\alpha/2+1, \alpha+2}( \R^{N-1} \times [0,T])}\\
&+\|v_0\|_{C^{\alpha+4}(\R^{N,+})}\big),
\end{split}
\end{equation}
where $K_5$ depends only on $\alpha$ and $N$.
\end{lemma}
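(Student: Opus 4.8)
The plan is to deduce this non-parabolically scaled estimate from the \emph{classical} parabolically scaled Schauder estimates on the half space (the inequalities (2.1)--(2.3) and Theorem 5.2, Chapter IV of \cite{lady}, which produce $C^{\alpha/2+1,\alpha+2}$-regularity out of a right-hand side in $C^{\alpha/2,\alpha}$, a Dirichlet datum in $C^{\alpha/2+1,\alpha+2}$ and a compatible initial datum in $C^{\alpha+2}$), applied not to $v$ itself but to carefully chosen derivatives of $v$, and then to recover the time derivatives directly from the equation. I would first establish \eqref{est346} as an \emph{a priori} estimate for smooth data, where all the differentiations and boundary traces below are legitimate, and then reach the stated generality by approximating $f$ and $v_0$ and using \eqref{est346} itself to control the convergence; this disposes of the fact that a priori one does not know $\partial_N^2 v$ to have a boundary trace.

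\emph{Tangential directions.} For a multi-index $\beta$ with $\beta_N=0$ and $|\beta|\le 2$, the function $\partial^\beta v$ solves the same heat equation with right-hand side $\partial^\beta f\in C^{\alpha/2,\alpha}$, homogeneous Dirichlet data (tangential derivatives of $v$ vanish on $\{x_N=0\}$) and initial datum $\partial^\beta v_0\in C^{\alpha+2}$; the order-one compatibility condition follows by differentiating the boundary relation $-\Delta v_0=f$ in the tangential directions. The classical estimate then gives $\partial^\beta v\in C^{\alpha/2+1,\alpha+2}$, and hence $\partial^\mu v\in C^{\alpha/2,\alpha}$ for every $\mu$ with $|\mu|\le 4$ and $\mu_N\le 2$. \emph{Normal directions.} Evaluating the equation on $\{x_N=0\}$, where $\partial_t v$ and the tangential second derivatives of $v$ vanish, yields the key boundary relation $\partial_N^2 v=-f$ on $\R^{N-1}\times[0,T]$. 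Consequently $w:=\partial_N^2 v$ solves the heat equation with right-hand side $\partial_N^2 f\in C^{\alpha/2,\alpha}$, the \emph{inhomogeneous} Dirichlet datum $-f|_{x_N=0}\in C^{\alpha/2+1,\alpha+2}(\R^{N-1}\times[0,T])$, and initial datum $\partial_N^2 v_0\in C^{\alpha+2}$. Here the compatibility conditions (iii) are exactly what is needed: the zeroth-order one reads $\partial_N^2 v_0=-f$ at $\{x_N=0,\,t=0\}$, while the first-order one, $\partial_t f+\Delta\partial_N^2 v_0+\partial_N^2 f=0$, is equivalent, modulo the tangential Laplacian $\Delta'$ of the boundary relation $\Delta v_0+f=0$, to the third relation in (iii). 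The classical estimate therefore gives $w\in C^{\alpha/2+1,\alpha+2}$, whence $\partial^\mu v\in C^{\alpha/2,\alpha}$ for all $\mu$ with $|\mu|\le 4$ and $\mu_N\ge 2$. Together with the tangential step, this controls $\partial^\mu v$ in $C^{\alpha/2,\alpha}$ for \emph{all} $|\mu|\le 4$.

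\emph{Time derivatives and conclusion.} Once $\partial^\mu v\in C^{\alpha/2,\alpha}$ is available for every $|\mu|\le 4$, the identity $\partial_t v=\Delta v+f$ shows at once that $\partial_t v\in C^{\alpha/2,\alpha+2}$, that is $\partial_t\partial^\gamma v\in C^{\alpha/2,\alpha}$ for all $|\gamma|\le 2$, which supplies the remaining terms of the $C^{\alpha/2+1,\alpha+4}$-norm. Assembling the two applications of the classical estimate and tracking constants (which, the coefficient being the identity, depend only on $\alpha$ and $N$) gives \eqref{est346}, the inhomogeneous datum in the normal step accounting for the term $\|f\|_{C^{\alpha/2+1,\alpha+2}(\R^{N-1}\times[0,T])}$.

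I expect the main obstacle to be the normal step: one must correctly identify the boundary value $-f|_{x_N=0}$ of $\partial_N^2 v$ and verify that the hypotheses (iii) translate \emph{exactly} into the zeroth- and first-order compatibility conditions required to apply the inhomogeneous parabolic Schauder estimate to $\partial_N^2 v$. It is precisely here that the extra time-regularity of $f$ on the boundary is indispensable: in the interior $f$ is only $C^{\alpha/2}$ in time, whereas the Dirichlet datum $-f|_{x_N=0}$ of $\partial_N^2 v$ must lie in $C^{\alpha/2+1,\alpha+2}$ for the (parabolically scaled) estimate to apply, and this is furnished exactly by hypothesis (i) on $f|_{x_N=0}$.
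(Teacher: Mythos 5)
Your proposal is correct and follows essentially the same route as the paper: reduce to the parabolically scaled Schauder estimate (Theorem \ref{t3.3}) applied to tangential second derivatives and then to $w=\partial_N^2 v$, whose boundary trace $-f|_{x_N=0}$ is read off from the equation restricted to $\{x_N=0\}$, with hypothesis (i) supplying exactly the needed regularity of the inhomogeneous Dirichlet datum and (iii) the compatibility conditions. Your explicit verification that the first-order compatibility condition for $w$ is equivalent, modulo the tangential Laplacian of the relation $\Delta v_0+f=0$, to the third condition in (iii), and your approximation remark legitimizing the differentiations, merely spell out steps the paper leaves implicit.
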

%\begin{remark}
%We will see in the proof that any solution in $C^{1+\alpha/2, \alpha+2k+2}(\R^{N,+} \times [0,T])$ satisfies $(-\Delta)^{\ell}v=\phi_\ell$ on $\partial D$ and therefore $\phi_\ell \in C^{1+\alpha/2, \alpha+2(k-\ell)+2}(\R^{N,+} \times [0,T])$. In particular, $f=-\phi_1\in C^{1+\alpha/2, \alpha+2k}(\R^{N,+} \times [0,T])$. We deduce that condition (i), (ii) and (iii) are also necessary.
%\end{remark}
\begin{proof}

We already know from Theorem \ref{t3.3} that $v\in C^{\alpha/2+1, \alpha+2}(\R^{N,+} \times [0,T])$.
 
Let $w=\partial_{ij}v$, with $i\ne N, \, j\ne N$. It satisfies 
\begin{equation*}
\left\{
\begin{aligned}
\partial_t w-\Delta w &= \partial_{ij} f,&\text{ in }&\R^{N,++}\times (0,T],\\
w&=0,&\text{ on }&\R^{N-1}\times (0,T],\\
w(0)&=\partial_{ij} v_{0},&\text{ in }&\R^{N,++}.
\end{aligned}
\right.
\end{equation*}
Clearly, $\partial_{ij}f$ and $\partial_{ij}v_0$ satisfy the assumption of Theorem \ref{t3.3}. We deduce that 
$\partial_{ij}v\in C^{\alpha/2+1, \alpha+2}(\R^{N,+} \times [0,T])$.

It remains to prove that $\partial_n^{2}v\in C^{1+\alpha/2, \alpha+2}(\R^{N,+} \times [0,T])$. 
We write $w=\partial_n^{2}v$. Note 
that:
$$
w=\partial_t v-\Delta_{x'} v -f.
$$
Hence taking the restriction at $x_N=0$:
$$
w|_{x_N=0}=-f.
$$
We deduce that $w$ satisties:
\begin{equation*}
\left\{
\begin{aligned}
\partial_t w-\Delta w &= \partial_n^{2}f,&\text{ in }&\R^{N,++}\times (0,T],\\
w&=-f,&\text{ on }&\R^{N-1}\times (0,T],\\
w(0)&=\partial_n^{2} v_{0},&\text{ in }&\R^{N,++}.
\end{aligned}
\right.
\end{equation*}
Our assumptions imply that $\partial_n^2f$, $\partial_n^{2}v_0$ and $f|_{S_T}$ 
satisfy the assumptions of Theorem \ref{t3.3}. We deduce that $w\in C^{\alpha/2+1, \alpha+2}(\R^{N,+} \times [0,T])$ and this finishes the proof. Note that \eqref{est346} follows from the estimate 
given on $\partial_{ij}v$ and $\partial_n^{2}v$ by Theorem \ref{t3.3}.
\end{proof}

Following the argument in section 6, chapter IV in \cite{lady}, we generalize Lemma \ref{l3.4} to 
the case of a general elliptic operator with constant coefficient. 
%Then using a localization 
%argument as in the proof of Theorem \ref{t3.3}, but only in space, we obtain the following result 
%when $A=\sum_{ij} A_{ij}\partial_{ij}$ is a second order partial differential operator whose 
%coefficient depends only on $x$ and satisfying $\nu|\xi|^2\leq A(x)\xi\cdot\xi\leq\mu|\xi|^2$, for all $x\in D$
\begin{lemma}\label{l3.5}
Let $A=\sum_{ij} \bar a_{ij}\partial_{ij}$ is a second order partial differential operator with constant 
coefficients such that  
 $\nu|\xi|^2\leq \bar a\xi\cdot\xi\leq\mu|\xi|^2$, for all $\xi\in \R^N$.
Let  $\alpha \in(0,1)$ and $v$ be the solution of 
\begin{equation*}
\left\{
\begin{aligned}
\partial_t v-A v &= f,&\text{ in }&\R^{N,++}\times (0,T],\\
v&=0,&\text{ on }&\R^{N-1}\times (0,T],\\
v(0)&=v_{0},&\text{ in }&\R^{N,++}.
\end{aligned}
\right.
\end{equation*}
Assume that 
\begin{itemize}
\item[(i)]  $f\in C^{\alpha/2,\alpha+2}(\R^{N,+}\times [0,T])$ and $f|_{ x_N=0}\in C^{\alpha/2+1, \alpha+2}( \R^{N-1} \times [0,T])$%, $A\in C^{\alpha/2,\alpha+2}(\R^{N,+}\times [0,T])$,
\item[(ii)] $v_0\in C^{\alpha+4}(\R^{N,+})$, % for some $\iota>0$ 
$-A v_0=f$  and $\partial_tf+A^{2}v_0+A f=0$ for $x_N=0,\; t=0$.
\end{itemize}
then $v\in C^{\alpha/2+1, \alpha+4}(D_T)$ and 
\begin{equation*}
\begin{split}
 \|v\|_{ C^{\alpha/2+1, \alpha+4}(\R^{N,+}\times [0,T])} &\le 
K_6 %P_4(\bar A_{\alpha+2})
\big(\|f\|_{C^{\alpha/2,\alpha+2}(\R^{N,+}\times [0,T])}+\| f|_{x_N=0}\|_{ C^{\alpha/2+1, \alpha+2}( \R^{N-1} \times [0,T])}\\
&\quad+\|v_0\|_{C^{\alpha+4}(\R^{N,+})}\big),
\end{split}
\end{equation*}
where $K_6$ depends on  $\alpha$, $N$, $\mu$ and $\nu$.
% $P_5$ is a polynomial and 
%$bar A_{\alpha+2}=\|A\|_{C^{\alpha/2,\alpha+2}(\R^{N,+}\times [0,T])}$.
\end{lemma}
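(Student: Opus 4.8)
The plan is to reduce the general constant-coefficient operator $A$ to the Laplacian by a linear change of the spatial variables that preserves the half-space, and then to invoke Lemma \ref{l3.4}. First I would construct this change of variables. Since $\bar a$ is symmetric and positive definite, its inverse admits an upper-triangular factorization $\bar a^{-1}=L^\top L$, where $L$ is upper triangular with positive diagonal entries. By construction $L\bar a L^\top=\mathrm{Id}$. Because $L$ is upper triangular its last row is $(0,\dots,0,L_{NN})$ with $L_{NN}>0$; hence the map $x\mapsto y=Lx$ satisfies $y_N=L_{NN}x_N$, so it maps $\R^{N,++}$ bijectively onto itself and the boundary $\{x_N=0\}$ onto $\{y_N=0\}$ through the invertible tangential block $L'$ (the top-left $(N-1)\times(N-1)$ block of $L$). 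From $\nu\,\mathrm{Id}\le \bar a\le\mu\,\mathrm{Id}$ one reads off $\|L\|\le\nu^{-1/2}$ and $\|L^{-1}\|\le\mu^{1/2}$, so every constant produced by this reduction will depend only on $\nu,\mu,N$.

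Next I would transport the problem. Setting $\tilde v(t,y)=v(t,L^{-1}y)$, $\tilde f(t,y)=f(t,L^{-1}y)$ and $\tilde v_0(y)=v_0(L^{-1}y)$, the chain rule gives the pointwise identity $Av(x)=(\Delta\tilde v)(Lx)$, so $\tilde v$ solves $\partial_t\tilde v-\Delta\tilde v=\tilde f$ on $\R^{N,++}\times(0,T]$ with $\tilde v=0$ on $\R^{N-1}\times(0,T]$ and $\tilde v(0)=\tilde v_0$. Applying the same identity twice yields $A^2v_0(x)=(\Delta^2\tilde v_0)(Lx)$, and therefore the compatibility conditions $-Av_0=f$ and $\partial_tf+A^2v_0+Af=0$ on $\{x_N=0,\,t=0\}$ become exactly $-\Delta\tilde v_0=\tilde f$ and $\partial_t\tilde f+\Delta^2\tilde v_0+\Delta\tilde f=0$ on $\{y_N=0,\,t=0\}$, which are the hypotheses (ii)--(iii) of Lemma \ref{l3.4} for the transported data.

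Then I would check that the relevant Hölder spaces are preserved under this fixed linear change of the spatial variable. Since the time variable is untouched and $y=Lx$ is smooth and bi-Lipschitz, composition with $L$ and $L^{-1}$ maps $C^{\alpha/2,\alpha+2}(\R^{N,+}\times[0,T])$, $C^{\alpha/2+1,\alpha+4}(\R^{N,+}\times[0,T])$ and the boundary space $C^{\alpha/2+1,\alpha+2}(\R^{N-1}\times[0,T])$ (the latter through the tangential map $L'$, which keeps the extra boundary time-regularity of $f$ intact) into themselves with equivalent norms; the equivalence constants involve products of up to four entries of $L$ or $L^{-1}$, hence are bounded by powers of $\|L\|,\|L^{-1}\|$ and depend only on $\nu,\mu,N$. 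Applying Lemma \ref{l3.4} to $\tilde v$ and transporting the estimate \eqref{est346} back through $x=L^{-1}y$ then gives $v\in C^{\alpha/2+1,\alpha+4}(\R^{N,+}\times[0,T])$ together with the claimed bound, with $K_6$ obtained by combining $K_5$ (depending on $\alpha,N$) with the norm-equivalence constants (depending on $\nu,\mu,N$).

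The only points requiring genuine care are the existence of a \emph{half-space-preserving} linear reduction to the Laplacian and the correct transport of the boundary hypothesis $f|_{x_N=0}\in C^{\alpha/2+1,\alpha+2}$; both are settled by the upper-triangular structure of $L$, which forces $\{y_N=0\}=L(\{x_N=0\})$ and makes $L$ act on the boundary through the invertible block $L'$. Everything else is the routine bookkeeping of a linear change of variables, and no new analytic estimate beyond Lemma \ref{l3.4} is needed.
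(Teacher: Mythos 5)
Your proposal is correct and matches the paper's intended argument: the paper proves Lemma \ref{l3.5} by simply invoking the argument of Section 6, Chapter IV of \cite{lady}, which is precisely the half-space-preserving linear change of variables reducing the constant-coefficient operator to the Laplacian so that Lemma \ref{l3.4} applies. Your triangular factorization of $\bar a^{-1}$, the transport of the compatibility conditions, and the tracking of the constants through $\|L\|\le\nu^{-1/2}$, $\|L^{-1}\|\le\mu^{1/2}$ supply exactly the details the paper leaves to the reference.
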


The proof of  Theorem \ref{t3.4}. Follows now exactly the proof given in section 7, chapter IV in \cite{lady}. 
It is quite long and we do not reproduce it here. 

Note contrary to Theorem \ref{t3.3}, we do not know
that $v$ has the required regularity. This has to be proved. As above, the proof  consists in  using 
a suitable covering of the domain $D_T$ into small sub-domains. On each of these domains 
intersecting the boundary, the  equation is transformed by a local change of coordinate into an 
equation on the half space. This latter equation is approximated by the same equation with frozen 
coefficients, whose solution can be estimated thanks to  Lemma \ref{l3.5}. If the sub-domain does not
intersect the boundary, an estimate is trivially obtained by differentiation of the equation. 
Putting together all these
local inverses, we show that we obtain an approximate inverse. Than an elementary argument allows to conclude. 

Note that contrary to \cite{lady}, we work with non zero initial data (see the discussion after Theorem 
5.3 for the definition of this notion). The price to pay is that we need to pay particular attention to the 
compatibility conditions but this does not cause any problem.

The polynomial dependence on $A_{\alpha+2}$ comes from similar computations as in the proof of Theorem \ref{t3.3}. In the notation of section 7, chapter IV in \cite{lady}, this comes from the choice
of $\tau$. It has to be chosen small enough so that $\|T\|<1/2$ and the proof clearly shows that this
condition can be written in terms of $A_{\alpha+2}$ and the local coordinate systems on $\partial D$.

\section{First step in the regularity problem: proof of Theorem \ref{mainresult1}}
\label{sec:first}

In this section, we show the first step towards regularity of the weak solution $u$ to \eqref{equation}.
We consider the auxiliary problem \eqref{aux123} with $\Psi=H(u)$,
whose solution is given by the stochastic convolution
\begin{equation}\label{defz}
z(t)=\int_0^tS(t-s)H(u_s)\,\dd W_s,\quad t\in[0,T].
\end{equation}
Next, we define the process $y:=u-z$. It follows immediately that $y$ solves the following linear parabolic PDE with random coefficients
\begin{equation}\label{diff}
\left\{
\begin{aligned}
\partial_t y&=\mathrm{div}\left(A(u)\nabla y\right)+\diver(B(u))+F(u)+\mathrm{div}\left((A(u)-\text{I})\nabla z\right),&\text{ in }&D_T,\\
y&= 0, &\text{ on }&S_T,\\
y(0)&= u_0,&\text{ in }&D.\\
\end{aligned}
\right.
\end{equation}
This way, we have split $u$ into two parts, i.e. $y$ and $z$, that are much more convenient in order to study regularity.
% Our first regularity result reads as follows. 
%
%
%
%\begin{proposition}\label{firststep}
%Let $u_0\in L^m(\Omega;C^{\iota}(\overline{D}))$ for some $\iota>0$ and all $m\in[2,\infty)$. We assume that {\em(\hyperref[compgamma]{$\mathrm{H}_{1,2}$})} is fulfilled. Then, there exists $\eta>0$ such that, for all $m\in[2,\infty)$, the weak solution $u$ to $(\ref{equation})$ belongs to $L^m(\Omega;C^{\eta}(\overline{D_T}))$.
%\end{proposition}

\begin{proof}[Proof of Theorem \ref{mainresult1}]

{\em Step 1: Regularity of $z$.} According to the hypothesis, the weak solution $u$ to $(\ref{equation})$ belongs to $L^2(\Omega;L^2(0,T;H^{1,2}_0))$ so that, thanks to the hypothesis (\hyperref[compgamma]{$\text{H}_{1,2}$}), we have that $H(u)$ belongs to $L^2(\Omega;L^2(0,T;\gamma(K,H^{1,2}_0)))$. As a result, with Proposition \ref{RegulStochastic} - $(ii)$ and the bound (\hyperref[compgamma]{$\text{H}_{1,2}$})
% and the embedding $H^{a,r}_0\subset H^{a,r}$,
we have that for any $a\in(0,2)$, ${z}\in L^2(\Omega;L^2(0,T;H^{a,2}_0))$ with
\begin{equation}\label{eq:zz}
\E\|{z}\|^2_{L^2(0,T;H_0^{a,2})}\leq C\Big{(}1+\E\|u\|^2_{L^2(0,T;H^{1,2}_0)}\Big{)}
\end{equation}
and by Proposition \ref{RegulStochastic} - $(i)$, for $p>2$,
\begin{equation}\label{eq:zzz}
\E\|{z}\|^p_{C([0,T];L^{2})}\leq C\Big{(}1+\E\|u\|^p_{L^p(0,T;L^{2})}\Big{)}.
\end{equation}
Besides, since for all $p\in[2,\infty)$, the weak solution $u$ to $(\ref{equation})$ belongs to $L^p(\Omega;L^p(0,T;L^p))$, we obtain, with the hypothesis (\hyperref[compgamma]{$\text{H}_{0,p}$}) (see Remark \ref{H0r}), that $H(u)$ belongs to $L^p(\Omega;L^p(0,T;\gamma(K,L^p)))$. As a consequence, with Proposition \ref{RegulStochastic} - $(ii)$ and the bound (\hyperref[compgamma]{$\text{H}_{0,p}$})
% and the embedding $H^{a,r}_0\subset H^{a,r}$,
we have that for any $b\in(0,1)$, ${z}\in L^p(\Omega;L^p(0,T;H^{b,p}_0))$ with
$$
\E\|{z}\|^p_{L^p(0,T;H^{b,p}_0)}\leq C\Big{(}1+\E\|u\|^p_{L^p(0,T;L^p)}\Big{)}.
$$
We have proved that for any $a\in(0,2)$ and $b\in(0,1)$, we have ${z}\in L^2(\Omega;L^2(0,T;H_0^{a,2}))$ and ${z}\in L^p(\Omega;L^p(0,T;H^{b,p}_0))$. We can now interpolate to obtain that (see \cite{amann})
$${z}\in L^r(\Omega;L^r(0,T;H_0^{c,r})),$$
where, for $\theta\in(0,1)$,
$$
\left\{
\begin{aligned}
\frac{1}{r}&=\frac{\theta}{2}+\frac{1-\theta}{p}, \\
c &= a\theta+b(1-\theta),
\end{aligned}
\right.
$$
with the bound
\begin{equation}\label{interpol}
\E\|{z}\|^r_{L^r(0,T;H_0^{c,r})}\leq \left(\E\|{z}\|^2_{L^2(0,T;H^{a,2}_0)}\right)^{r\theta/2}\left(\E\|{z}\|^p_{L^p(0,T;H^{b,p}_0)}\right)^{r(1-\theta)/p}<\infty.
\end{equation}
Note that by choosing $\theta\in(0,1)$ and $p\in[2,\infty)$ appropriately, $r$ can be arbitrary in $[2,\infty)$. Furthermore, when $\theta\in(0,1)$ is fixed, it is always possible to take $(a,b)\in(0,2)\times(0,1)$ such that $c>1$. As a result, for all $r\in[2,\infty)$, there exists $c_r>1$ such that
$${z}\in L^r(\Omega;L^r(0,T;H_0^{c_r,r})).$$
This gives, for all $r\in[2,\infty)$,
$$\nabla {z}\in L^r(\Omega;L^r(0,T;L^r)),$$
and, due to the boundedness of the mapping $A$,
$$(A(u)-\text{I})\nabla {z}\in L^r(\Omega;L^r(0,T;L^r)),$$
with, thanks to $(\ref{interpol})$,
\begin{equation}\label{ladyzenbound1}
\E\|(A(u)-\text{I})\nabla {z}\|^r_{L^r(0,T;L^r)}\leq C\E\|{z}\|^r_{L^r(0,T;H_0^{c,r})}<\infty,
\end{equation}
where $C>0$ depends on $\mu$ from \eqref{elliptic}.
Note that, thanks to the linear growth property of the coefficients $B$ and $F$, we obviously have, for all $r\in[2,\infty)$,
\begin{equation}\label{ladyzenbound2}
\E\|B(u)\|^r_{L^r(0,T;L^r)}+\,\E\|F(u)\|^r_{L^r(0,T;L^r)}\leq C(1+\E\|u\|^r_{L^r(0,T;L^r)})<\infty.
\end{equation}

{\em Step 2: Regularity of $y$.} 
We apply Theorem \ref{thm:lady01} with
$$a=A(u),\quad g=B(u)+((A(u)-\textrm{I})\nabla z),\quad f=F(u).$$
The assumptions are satisfied. Indeed \eqref{elliptic} gives the required uniform ellipticity
and boundedness of $a$. Then sublinear growth of $B,F$, {\em Step 1} and \eqref{eq:zz} imply
 that assumption (ii) is satisfied. By the basic energy estimate for parabolic equations \eqref{44} we obtain that $y\in L^\infty(0,T;L^2)\cap L^2(0,T;W^{1,2}_0)$ a.s. Besides, since $u\in C([0,T];L^2)$ a.s. due to our assumptions and the same is valid for $z$ due to \eqref{eq:zzz}, assumption (i) holds true as well. We may conclude
\begin{equation}\label{yinfty}
\begin{split}
\|y\|_{C^{\alpha/2,\alpha}}&\leq K_1\big(\|u_0\|_{C^{\alpha}(\overline{D})}\\
&\quad+\|B(u)+(A(u)-\text{I})\nabla z\|_{L^{2r_0}(D_T)}+\|F(u)\|_{L^{r_0}(D_T)}\big)
\end{split}
\end{equation}
for $\alpha\leq \iota$ where $\alpha$ is given by Theorem \ref{thm:lady01}.
Therefore
\begin{equation}\label{yholder}
 \begin{split}
\E  \|y\|^m_{C^{\alpha/2,\alpha}} &\leq K_1\big( \E\|u_0\|^{m}_{C^{\alpha}(\overline{D})} \\
&\hspace{-3mm}+\E\|B(u)+(A(u)-\text{I})\nabla z\|^{m}_{L^{2r_0}(D_T)}+\E\|F(u)\|^{m}_{L^{r_0}(D_T)}\big).
 \end{split}
\end{equation}
We now use \eqref{ladyzenbound1}$-$\eqref{ladyzenbound2} to deduce that the above right hand side is finite thanks to our assumptions on $u$.

%Since $r$ is arbitrary in $[r_0,\infty)$, the result holds for all $m\in[2,\infty)$.

{\em Step 3: H\"{o}lder regularity of $z$.} In order to complete the proof it is necessary to improve the regularity of $z$. Recall that for all $m\in[2,\infty)$, the solution $u$ to $(\ref{equation})$ belongs to $L^m(\Omega;L^m(0,T;L^m))$ and that $H(u)$ belongs to $L^m(\Omega;L^m(0,T;\gamma(K,L^m)))$. We now apply Proposition \ref{RegulStochastic} - $(i)$ and (\hyperref[compgamma]{$\text{H}_{0,m}$}) to obtain, since $H^{a,r}_0\subset H^{a,r}$, that for $m\in(2,\infty)$, $\delta\in(0,1-2/m)$ and $\gamma\in [0,1/2 -1/m-\delta/2)$, ${z}\in L^m(\Omega;C^{\gamma}([0,T];H^{\delta,m}))$ with
$$\E\|{z}\|^m_{C^{\gamma}([0,T];H^{\delta,m})}\leq C\,\Big{(}1+\E\|u\|^m_{L^m(0,T;L^m)}\Big{)}.$$
Note that we can choose $\delta$ and $\gamma$ to be independent of $m$. For instance, let us suppose in the sequel that $m\geq 3$; then $\delta=1/6$ and $\gamma=1/12$ satisfies the conditions above for any $m\geq 3$. Furthermore, from now on, we also suppose that $m \geq 7N:=m_0$. This implies that $m\geq 3$ and $\delta m>N$, so that the following Sobolev embedding holds true
$$ H^{\delta,m} \hookrightarrow C^{\lambda},\quad \lambda:=\delta-N/m_0.$$
We conclude that, for all $m\geq m_0$,
\begin{equation}\label{zholder}
\E\|{z}\|^m_{C^{\gamma}([0,T];C^{\lambda})}\leq C\,\Big{(}1+\E\|u\|^m_{L^m(0,T;L^m)}\Big{)}<\infty.
\end{equation}
Note that for $m\in[2,m_0)$, we can write with the H\"{o}lder inequality
\begin{equation}\label{zholderbis}
\E\|{z}\|^m_{C^{\gamma}([0,T];C^{\lambda})}\leq \Big{(}\E\|{z}\|^{m_0}_{C^{\gamma}([0,T];C^{\lambda})}\Big{)}^{m/m_0}<\infty.
\end{equation}

{\em Step 4: Conclusion.} Finally, we set $\eta:=\text{min}(\alpha,2\gamma,\lambda)>0$ and we recall that $u=y+{z}$ so that the conclusion follows from \eqref{yholder}, \eqref{zholder}, \eqref{zholderbis} due to the fact that $C^{\eta/2}([0,T];C^{\eta}(\overline D))\subset C^{\eta/2,\eta}([0,T]\times \overline D)$.
\end{proof}

\section{Increasing the regularity: proof of Theorem \ref{mainresult2}}
\label{sec:increase}

In this final section, we complete the proof of Theorem \ref{mainresult2}. Having Theorem \ref{mainresult1} in hand, it is now possible to significantly increase the regularity of $u$ using the results given in Subsection \ref{s3.2}. 

We treat differently the cases $k=1, \,k=2 $ and $k=3,4$.

\subsection{The case $k=1$}
\label{subsec:k1}

%\begin{proposition}\label{increase1}
%Let $u_0\in L^m(\Omega;C^{1+\iota}(\overline{D}))$ for some $\iota>0$ and all $m\in[2,\infty)$. Suppose that $A,\,B\in C^1_b$. If \eqref{compgamma} is fulfilled for all $a<2$ and $r\in[2,\infty)$, then for all $\lambda\in(0,1/2)$ there exists $\beta>0$ such that for all $m\in[2,\infty)$ the weak solution $u$ to \eqref{equation} belongs to $L^m(\Omega\semicol C^{\lambda,1+\beta}(\overline{D_T}))$.
%\end{proposition}

The proof is divided in two parts: we first increase the regularity in space and then in time.

{\em Step 1: Regularity of $z$.} First, we improve the regularity of $z$ that was defined in \eqref{defz}.
According to Theorem \ref{mainresult1}, there exists $\eta>0$ such that for all $m\in[2,\infty)$, $u\in L^m(\Omega;C^{\eta/2,\eta}(\overline{D_T}))$. In particular, since $u$ satisfies Dirichlet boundary conditions, this implies that $u\in L^m(\Omega;L^m(0,T;H^{\kappa,m}_0))$ provided $\kappa<\eta$. With (\hyperref[compgamma]{$\text{H}_{\kappa,m}$}), we deduce that $H(u)\in L^m(\Omega;L^m(0,T;\gamma(K,H^{\kappa,m}_0)))$. An application of Proposition \ref{RegulStochastic} yields that $z\in L^m(\Omega;C^{\gamma}([0,T];H_0^{\kappa+\delta,m}))$ for every $m\in(2,\infty)$ with
$$\E\|z\|^m_{C^{\gamma}([0,T];H_0^{\kappa+\delta,m})}\leq C\,\Big{(}1+\E\|u\|^m_{L^m(0,T;H^{\kappa,m}_0)}\Big{)},$$
where $\delta\in(0,1-2/m)$ and $\gamma\in[0,1/2-1/m-\delta/2)$. In the sequel, we assume that $m\geq (N+4)/\kappa:=m_0$. Then $\delta=1-3/m_0$ and $\gamma=1/(4m_0)$ satisfies the conditions above uniformly in $m\geq m_0$. Furthermore, observe that $(\kappa+\delta)m> \kappa m\geq \kappa m_0\geq N$ so that the following Sobolev embedding holds true
$$ H^{\kappa+\delta,m} \hookrightarrow C^{\sigma},\quad \sigma = \kappa+\delta-N/m_0.$$
Besides, by definition of $m_0$, $\sigma=\kappa+1-(N+3)/m_0>1$.

We deduce that there exists $\gamma>0,\, \sigma >1$ such that for all $m\geq m_0$, $z\in L^m(\Omega;C^{\gamma}([0,T];C^{\sigma}(\overline D)))$ with
\begin{equation}\label{eq:estimz}
\E\|z\|^m_{C^{\gamma}([0,T];C^{\sigma}(\overline D))}\leq C\,\Big{(}1+\E\|u\|^m_{L^m(0,T;H^{\kappa,m}_0)}\Big{)}.
\end{equation}

{\em Step 2: Regularity of $y$.} Next, we improve the regularity of $y$ that is given by \eqref{diff}. 

As a consequence of Theorem \ref{mainresult1}, \eqref{ladyzenbound2} and \eqref{eq:estimz}, we obtain due to the assumptions upon $A,\,B$ and $F$ that, for all $m\in[2,\infty)$
\begin{equation*}
\begin{split}
A(u)&\in L^m(\Omega\semicol C^{\alpha/2,\alpha}(\overline{D_T}),\\
B(u)+(A(u)-\mathrm{I})\nabla z&\in L^m(\Omega\semicol C^{\alpha/2,\alpha}(\overline{D_T}),\\
F(u)&\in L^m(\Omega\semicol L^m(0,T\semicol L^m)),\\
u_0&\in L^m(\Omega;C^{1+\alpha}(\overline{D})),
\end{split}
\end{equation*}
where $\alpha:=\text{min}(\iota,\eta,\sigma-1,2\gamma)>0$. Thus the hypotheses of Theorem 
\ref{thm:schauder} are fulfilled and we obtain the following (pathwise) estimate
\begin{equation*}
 \begin{split}
\|y\|_{C^{(1+\alpha)/2,1+\alpha}}&\leq K_2\, P_1(a_{\alpha/2,\alpha})\Big(\|u_0\|_{C^{1+\alpha}(\overline D)} +\|B(u)+(A(u)-\mathrm{I})\nabla z\|_{C^{\alpha/2,\alpha}} \\
&\qquad\qquad\quad +\|F(u)\|_{L^r(0,T\semicol L^r)}\Big),  
 \end{split}
\end{equation*}
where $r\in[2,\infty)$ is large enough. We conclude that, for all $m\in[2,\infty)$,
\begin{equation}\label{newy}
y\in L^m(\Omega\semicol C^{(1+\alpha)/2,1+\alpha}(\overline{D_T}))
\end{equation}
which together with \eqref{eq:estimz} yields $u\in L^m(\Omega\semicol C^{\gamma,1+\alpha}(\overline{D_T})).$

{\em Step 3: Time regularity.} Having in hand the improved regularity of $u$, we consider again the stochastic convolution $z$, repeat the approach from the first step of this proof and obtain due to Theorem \ref{mainresult1} (with $\delta=0$) and (\hyperref[compgamma]{$\text{H}_{1+\kappa,m}$})
\begin{equation}\label{newz}
\begin{split}
\E&\|z\|^m_{C^{\lambda}([0,T];H_0^{1+\kappa,m})}\\
&\leq C\,\Big{(}1+\E\|u\|^m_{L^m(0,T;H^{1+\kappa,m}_0)}+\stred\|u\|^{(1+\kappa)m}_{L^{(1+\kappa)m}(0,T\semicol H^{1,(1+\kappa)m}_0)}\Big{)}<\infty,
\end{split}
\end{equation}
where $\kappa<\alpha$ and $\lambda\in(0,1/2-1/m)$. Therefore for any $\lambda\in(0,1/2)$ there exists $m_0$ large enough so that \eqref{newz} holds true for any $m\geq m_0$ and the Sobolev embedding then implies that $z\in L^m(\Omega\semicol C^\lambda([0,T]\semicol C^{1+\beta}(\overline D)))$ for $\beta<\kappa$. Since we already have \eqref{newy} we deduce that $u\in L^m(\Omega\semicol C^{\lambda,1+\alpha}(\overline{D_T}))$ for any $\lambda\in(0,1/2)$ and $m\in \N$. 

{\em Step 4: Conclusion.} It is now possible to reproduce the 3 steps above. In step 1, we can now 
take $\kappa < 1+\alpha$. Then 
$$
\E\|z\|^m_{C^{\gamma}([0,T];C^{\sigma})}\leq C\,\Big{(}1+\E\|u\|^m_{L^m(0,T;H^{\kappa,m}_0)}\Big{)}.
$$
with $\sigma = \kappa+\delta -N/m$, $\gamma<1/2-1/m-\delta/2$, $\delta <1-2/m$.

Let $\eps<(1-\iota)/2$, $ m\ge \max( 4,2N) 1/\eps$ and $\delta = (1-\alpha)/2$, then we can 
take $\sigma-1=2\gamma =(1-\alpha)/2-\eps$. Thus in step 2, we can reproduce the argument 
with $\alpha$ replaced by $\alpha_1=\min(\iota, (1-\alpha)/2-\eps)$ and conclude $u\in L^m(\Omega\semicol C^{\lambda,1+\alpha_1}(\overline{D_T}))$ for any $\lambda\in(0,1/2)$ and $m\in \N$. 
If $\alpha_1<\iota$, we reproduce this argument and define recursively $\alpha_{n+1}=\min(\iota, (1-\alpha_n)/2-\eps)$. In a finite number of step, we have $\alpha_n=\iota$.

The proof is complete for $k=1$.
\begin{remark}\label{r5.1}
Note that reproducing step 1, we can  finally prove that $z\in L^m(\Omega;C^{\gamma}([0,T];H_0^{\kappa+\delta,m}))$
for  $\kappa<1+\iota$, $\gamma<1/2-1/m-\delta/2$, $\delta <1-2/m$ and any $m\in \N$. In particular,
it is possible to take $\kappa+\delta-1/m>2$ and we deduce that $z$ and its first and second derivatives vanish on $\partial D$.
\end{remark}

\subsection{The case $k=2$}

{\em Step 1: Regularity of $z$.} Again we first increase the regularity of $z$. We know that for any $\lambda\in (0,1/2)$, there exists $\beta>0$ such that for all $m\in[2,\infty)$, $u\in L^m(\Omega;C^{\lambda,1+\beta}(\overline{D_T}))$. We deduce
\begin{equation*}
\begin{split}
\E\|z&\|^m_{C^{\gamma}([0,T];H^{1+\kappa+\delta,m})}\\
&\leq C\,\Big{(}1+\E\|u\|^m_{L^m(0,T;H^{1+\kappa,m})}+\stred\|u\|^{(1+\kappa)m}_{L^{(1+\kappa)m}(0,T\semicol H^{1,(1+\kappa)m})}\Big{)}<\infty,
\end{split}
\end{equation*}
where $\kappa<\beta$, $\delta\in(0,1-2/m)$ and $\gamma\in[0,1/2-1/m-\delta/2)$. By a similar reasoning as above we obtain due to the Sobolev embedding that there exist $\gamma>0$ and
$\sigma>2$ such that  $z\in L^m(\Omega\semicol C^{\gamma}([0,T];C^\sigma(\overline D)))$ 
for $m\in[2,\infty)$.

{\em Step 2: Regularity of $y$.} In order to improve the space regularity of $y$ we use Theorem
\ref{t3.3}. In particular, we set
$$a_{ij}=A_{ij}(u), , \quad f=\sum_{ij}A'_{ij}(u)\partial_iu\partial_j y + \mathrm{div}\big(B(u)+(A(u)-\mathrm{I})\nabla z\big)+F(u).$$
According to the results above, we have
\begin{equation}\label{scalingfail}
\begin{split}
a_{ij},\, f&\in L^m(\Omega\semicol C^{\alpha/2,\alpha}(\overline{D_T})),\\
u_0 &\in L^m(\Omega\semicol C^{2+\alpha}(\overline{D})),
\end{split}
\end{equation}
for some $\alpha \in (0,\sigma-2]$ and all $m\in[2,\infty)$ provided $A,\,B\in C^2_b,\, F\in C^1_b$. The compatibility conditions (iv) are: $u_0=0$ and $-\sum_{ij}A_{ij}(u)\partial_{ij}u
=\sum_{ij}A'_{ij}(u)\partial_iu\partial_j y + \mathrm{div}\big(B(u)+(A(u)-\mathrm{I})\nabla z\big)+F(u)=0$,
on $\{0\}\times \partial D$. The first one is clearly satisfied. For the second one, we use Remark
\ref{r5.1} to rewrite it as:
$-\sum_{ij}A_{ij}(0)\partial_{ij}u_0
=\sum_{ij}A'_{ij}(0)\partial_iu_0\partial_j u_0 + \mathrm{div}\big(B(u_0)\big)+F(u_0)=0$, rearranging the terms this is exactly $u_0^{(1)}$. 
Thus,  Theorem \ref{t3.3} applies and we deduce
$$y\in L^m(\Omega\semicol C^{\alpha/2+1,\alpha+2}(\overline{D_T})),$$
hence
$$u\in L^m(\Omega\semicol C^{\gamma,\alpha+2}(\overline{D_T})).$$

{\em Step 3: Time regularity.} Finally, we improve the time regularity of $u$ by considering the stochastic convolution again as in Subsection \ref{subsec:k1}. We obtain that for any $\lambda\in(0,1/2)$ there exists $m_0$ large enough so that
\begin{equation*}
\begin{split}
\E\|z&\|^m_{C^{\lambda}([0,T];H^{2+\kappa,m})}\\
&\leq C\,\Big{(}1+\E\|u\|^m_{L^m(0,T;H^{2+\kappa,m}_0)}+\E\|u\|^{(2+\kappa)m}_{L^{(2+\kappa)m}(0,T;H^{1,(2+\kappa)m}_0)}\Big{)},
\end{split}
\end{equation*}
holds true for any $m\geq m_0$ and the Sobolev embedding then implies that $z\in L^m(\Omega\semicol C^\lambda([0,T]\semicol C^{2+\beta}(\overline D)))$ for $\beta<\kappa$. 

Finally, we iterate the argument as in step 4 of the case $k=1$ and 
this completes the proof.

\subsection{The case $k=3,4$}

The case $k=3$ is treated exactly as above using Theorem \ref{t3.3} except that \eqref{scalingfail} is replaced by
\begin{equation}
\begin{split}
a_{ij},\,f&\in L^m(\Omega\semicol C^{\alpha/2,\alpha+1}(\overline{D_T}))
=L^m(\Omega\semicol C^{(\alpha+1)/2,\alpha+1}(\overline{D_T})),\\
u_0 &\in L^m(\Omega\semicol C^{\alpha+3}(\overline{D})),
\end{split}
\end{equation}
for some $\alpha\in (0,\sigma-3]$ where $\sigma > 3$.

For $k=4$, we argue similarly but apply Theorem \ref{t3.4}. The only thing we need to check is that 
$f$ is smooth in time on the boundary and the
validity of the third compatibility condition in (iii). 
Recall that $f=\sum_{ij}A'_{ij}(u)\partial_iu\partial_j y + \mathrm{div}\big(B(u)+(A(u)-\mathrm{I})\nabla z\big)+F(u)$. Since $z$ and its derivatives up to order $4$ vanish on $\partial D$, we have
$$
f|_{S_T}=\sum_{ij}A'_{ij}(y)\partial_iy\partial_j y + \mathrm{div}\big(B(y)\big)+F(y).
$$
In the proof for $k=3$, we get in Step 2 that $y\in L^m(\Omega\semicol C^{1+\iota/2,3+\iota}(\overline{D_T}))$, this 
shows that $f|_{S_T}\in L^m(\Omega\semicol C^{1+\iota/2,2+\iota}(\overline{S_T}))$. Concerning the compatibility condition, we take $\cA=\sum_{ij}A_{ij}(u)\partial_{ij}$. Thus, on the boundary 
$\cA=\sum_{ij}A_{ij}(0)\partial_{ij}$. In particular, it is constant in time and the last term of the 
compatibility condition vanishes. Moreover on $\{0\}\times \partial D$, $\partial_t u =u_0^{(1)}=0$, we deduce:
$$
\partial_t f= 2\sum_{ij}A'_{ij}(0)\partial_iu^{(1)}_0\partial_j u_0 + B'(0)\cdot \nabla u_0^{(1)}.
$$
Also, still on $\{0\}\times \partial D$,
$$
\cA f=\sum_{lk}A_{lk}(0)\partial_{lk}\left( \sum_{ij}A'_{ij}(u_0)\partial_iu_0\partial_j u_0 + \mathrm{div}\big(B(u_0)\big)+F(u_0)\right)
$$
and, since $\cA=\cA_0$,
$$
\cA^2u_0+\cA f= \cA_0\left(  \sum_{ij}A_{ij}(u_0) \partial_{ij}u_0+A'_{ij}(u_0)\partial_iu_0\partial_j u_0 + \mathrm{div}\big(B(u_0)\big)+F(u_0)\right).
$$
Regrouping terms, we obtain:
$$
\cA^2u_0+\cA f= \cA_0u_0^{(1)}.
$$
Therefore our assumption in Theorem \ref{mainresult2} implies the compatibility condition.

%\nocite{daprato}
%\bibliographystyle{plain}
%\bibliography{biblio}

\end{document}